\newtheorem{theorem}{Theorem}[section]
\newtheorem{definition}[theorem]{Definition}
\newtheorem{example}[theorem]{Example}
\newtheorem{proposition}[theorem]{Proposition}
\newtheorem{remark}[theorem]{Remark}
\newtheorem{conjecture}[theorem]{Conjecture}
\numberwithin{equation}{section}
\def \hfillx {\hspace*{-\textwidth} \hfill}
\begin{document}

\title{A Homology Theory For A Special Family Of Semi-groups}

\author{Sujoy Mukherjee}
\thanks{The author was supported by the Presidential Merit Fellowship of the George Washington University.}
\address{Department of Mathematics, The George Washington University, Washington DC, USA.}
\email{sujoymukherjee@gwu.edu}

\keywords{pre-simplicial modules, shelves, Temperley-Lieb algebra, two term (rack) homology, one term homology}

\date{September 19, 2015, and in revised form, November 16, 2016.}

\subjclass[2010]{Primary: 18G60. Secondary: 20N02, 57M25.}

\begin{abstract}
In this paper, we construct a new homology theory for semi-groups satisfying the self distributivity axiom or the idempotency axiom. Next, we consider the geometric realization corresponding to the homology theory. We continue with the comparison of this homology theory with one term and two term (rack) homology theories of self-distributive algebraic structures. Finally, we propose connections between the homology theory and knot theory via Temperley-Lieb algebras.
\end{abstract}

\maketitle

\tableofcontents

Self-distributive algebraic structures such as quandles and racks are motivated by knot theory. Rack homology(also known as two term homology) for shelves was introduced by Roger A. Fenn, Colin P. Rourke and Brian J. Sanderson \cite{FRS1,FRS2,FRS3}. This was modified into quandle homology by J. Scott Carter, Daniel Jelsovsky, Seiichi Kamada, Laurel Langford and Masahico Saito to define co-cycle invariants for knots \cite{CJKLS}. Later, one term homology for shelves was introduced by J\'ozef H. Przytycki \cite{Prz1}. While quandles are very useful from a knot theoretic point of view, for algebraic purposes one may juggle with the axioms of a quandle to obtain other algebraic structures and study these under the same settings. This note grew out of a similar attempt. While trying to understand the behavior of rack and one term homology of associative shelves I decided to manipulate some axioms and explore. What I obtained is described in this note. J\'ozef H. Przytycki suggested that the homology theory should be called {\bf elbow homology} (or {\bf lbo homology}). To understand this terminology please stare long enough at the graphical interpretation of the face maps in Figure \ref{graphical interpretation of the face maps}! The note is organized as follows.

The following section introduces the necessary tools required for defining the homology theory after which it discusses the main aspects of lbo homology. In Section 2 the geometric aspects of lbo homology have been introduced. Lbo homology has connections to Temperley-Lieb algebras and Jones' monoids. This has been discussed in Section 3. As associative shelves lie in the intersection of associative algebraic structures and self-distributive algebraic structures, there are several homology theories that work for associative shelves. A comparative study has been done in Section 4 along with tables of computations done for lbo homology. Based on these, open questions are discussed. 

\section{Introduction}

The necessary prerequisites for defining the homology theory is introduced in the following subsection.  

\subsection{Preliminaries}

A {\bf shelf} or a right self-distributive algebraic structure is a magma $(X,*)$ satisfying the following axiom for all $a,b,c \in X$:
\begin{equation*}
  (a*b)*c = (a*c)*(b*c).
\end{equation*}

A shelf is called a {\bf rack} if there exists $\bar{*}:X \times X \longrightarrow X$ such that for all $a,b \in X$,
\begin{equation*}
(a\bar{*} b)*b = a = (a*b) \bar{*} b.
\end{equation*}

In addition, if every element in a rack is idempotent, then the algebraic structure obtained is called a {\bf quandle}. On the other hand, if every element in a shelf is idempotent, the resulting algebraic structure is called a {\bf spindle}. The three axioms of a quandle correspond to the three Reidemeister moves and therefore quandles are very useful tool to build invariants of links. However, to work with framed links it is enough to consider racks.

As will be observed later, the associativity axiom is needed for lbo homology. However, the trivial quandle is the unique rack\footnote{The trivial quandle is defined in the following way. Let $(X,*)$ be a magma. For all $a,b \in X$ define $a*b = a$. Here, the trivial quandle is referred to as a rack to emphasize that the second and not the first axiom of a quandle is playing a role here. There are several spindles which are associative.} which satisfies the associativity axiom. This is bad news for lbo homology from a knot theoretic point of view.

In 2015, it was proven that shelves with an identity element (unit) are associative. The proof follows from the following two propositions.

\begin{proposition}[Sam C.]\label{proposition 1 Sam C.}
    For a unital shelf $(X,*)$, the following axioms hold:
    \begin{enumerate}
        \item {$a*a = a$, for all $a \in X$. In other words, $(X,*)$ is a spindle.}
        \item {$a*b = b*(a*b)$ for all $a,b \in X$.}
        \item {$a*b = (a*b)*b$ for all $a,b \in X$.}
    \end{enumerate}
\end{proposition}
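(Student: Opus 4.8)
The plan is to obtain all three identities as specializations of the self-distributive law $(a*b)*c=(a*c)*(b*c)$, using the unit to either cancel or duplicate factors. Throughout I would fix notation by letting $e$ denote the unit, so that $e*a=a=a*e$ for every $a\in X$, and in particular $e*e=e$. Two of the three identities will turn out to be immediate substitutions, whereas the idempotency statement (1) needs a small but essential rewriting; I expect that to be the only genuine obstacle, so I would actually dispatch (2) and (3) first and treat (1) as the crux.

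For statement (2), I would substitute the unit into the \emph{first} slot of the self-distributive law. Taking the instance $(e*b)*c=(e*c)*(b*c)$ and simplifying the left factors via $e*b=b$ and $e*c=c$ produces $b*c=c*(b*c)$; relabelling $b\mapsto a$ and $c\mapsto b$ yields exactly $a*b=b*(a*b)$. For statement (3), I would instead place the unit in the \emph{middle} slot: the instance $(a*e)*c=(a*c)*(e*c)$ simplifies, using $a*e=a$ and $e*c=c$, to $a*c=(a*c)*c$, and relabelling $c\mapsto b$ gives $a*b=(a*b)*b$. Both of these are one-line specializations once one decides which variable to set equal to $e$.

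The heart of the argument is statement (1), and here the trick is to introduce a redundant copy of the unit before distributing. I would write $a=e*a$, then replace the first $e$ by $e*e$ (legitimate since $e*e=e$), and finally distribute over $a$:
\[
a=e*a=(e*e)*a=(e*a)*(e*a)=a*a .
\]
The middle equality is self-distributivity applied with $(a,b,c)=(e,e,a)$, and the last step uses $e*a=a$ twice. This gives $a*a=a$, so $(X,*)$ is a spindle. The step I expect to be the main obstacle is precisely spotting that one should rewrite $e=e*e$ inside the left factor: distributing $(e*e)*a$ then manufactures two copies of $e*a=a$, collapsing the whole expression to $a*a$.

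Finally, I would remark that the three parts are logically independent in this argument — each is derived directly from self-distributivity and the unit axioms, none relying on the others — so there is no risk of circularity and they may be presented in any convenient order.
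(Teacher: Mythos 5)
Your proof is correct: all three identities are valid specializations of right self-distributivity $(a*b)*c=(a*c)*(b*c)$ with the unit $e$ inserted in the appropriate slot, and the key step for idempotency --- rewriting $e=e*e$ before distributing to get $a=(e*e)*a=(e*a)*(e*a)=a*a$ --- is exactly right. The paper itself states this proposition without proof (it is quoted as a known result of Sam C., with only the relations between unital, pre unital and proto unital shelves discussed afterwards), so there is no argument in the text to compare against; yours is the natural one. As a minor simplification, note that part (1) is also an instance of part (3) with $a=e$, since $(e*b)*b=e*b$ reads $b*b=b$, so the three parts need not be treated as independent.
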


A shelf satisfying all the three axioms in the above proposition is called a {\bf pre unital shelf} and a shelf satisfying the second and the third axioms of the above proposition is called a {\bf proto unital shelf}. The converse of Proposition \ref{proposition 1 Sam C.} does not hold but it is easy to observe that removing the unit element from a unital shelf gives a pre unital shelf and adding unit element to a pre unital shelf gives a unital shelf \cite{CMP}.

\begin{proposition}[Sam C.]\label{proposition 2 Sam C.}
    Proto unital shelves are associative.
\end{proposition}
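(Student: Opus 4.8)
The plan is to prove associativity, that is, to show $(a*b)*c = a*(b*c)$ for all $a,b,c \in X$, using only the self-distributivity axiom together with the second and third axioms of Proposition~\ref{proposition 1 Sam C.}. Note that idempotency is not available here, since a proto unital shelf need not be a spindle, so the argument must rely solely on self-distributivity and these two relations.

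First I would unfold the left-hand side by a single application of self-distributivity:
\[
(a*b)*c = (a*c)*(b*c).
\]
This reduces the entire problem to proving the identity $(a*c)*(b*c) = a*(b*c)$, i.e. to showing that the spurious factor $c$ in the first slot can be absorbed into $a*(b*c)$.

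The key move — and the step I expect to be the main obstacle — is to apply self-distributivity a \emph{second} time, now reading $(a*c)*(b*c)$ as $(x*y)*z$ with $x=a$, $y=c$, and $z=b*c$. This gives
\[
(a*c)*(b*c) = \bigl(a*(b*c)\bigr)*\bigl(c*(b*c)\bigr).
\]
The difficulty here is purely one of recognition rather than calculation: the natural temptation is to attack $a*(b*c)$ directly with the second and third axioms, but every such attempt folds back on itself and makes no progress, whereas distributing the already-distributed expression is precisely what breaks the symmetry.

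Once this is in hand, the two remaining axioms finish the job. The second axiom of Proposition~\ref{proposition 1 Sam C.}, in the form $b*c = c*(b*c)$, collapses the right factor, leaving $\bigl(a*(b*c)\bigr)*(b*c)$; and the third axiom, in the form $(x*y)*y = x*y$ with $x=a$ and $y=b*c$, collapses the trailing $*(b*c)$ to leave exactly $a*(b*c)$. Chaining these equalities yields $(a*b)*c = a*(b*c)$, as desired. I would therefore organize the write-up as a single displayed chain of four equalities, annotating each step with the axiom invoked, since no case analysis or auxiliary lemma is needed.
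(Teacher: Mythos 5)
Your proof is correct and complete: the chain $(a*b)*c = (a*c)*(b*c) = \bigl(a*(b*c)\bigr)*\bigl(c*(b*c)\bigr) = \bigl(a*(b*c)\bigr)*(b*c) = a*(b*c)$ uses only right self-distributivity (twice) and the two proto-unital identities, exactly as needed. The paper states this proposition without proof (citing Sam C.), and your argument is precisely the standard one from the cited source, so there is nothing to add.
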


The relations between the various algebraic structures discussed above is summarized in Figure \ref{firstfig}.

\begin{figure}[ht]
\includegraphics[scale=0.25]{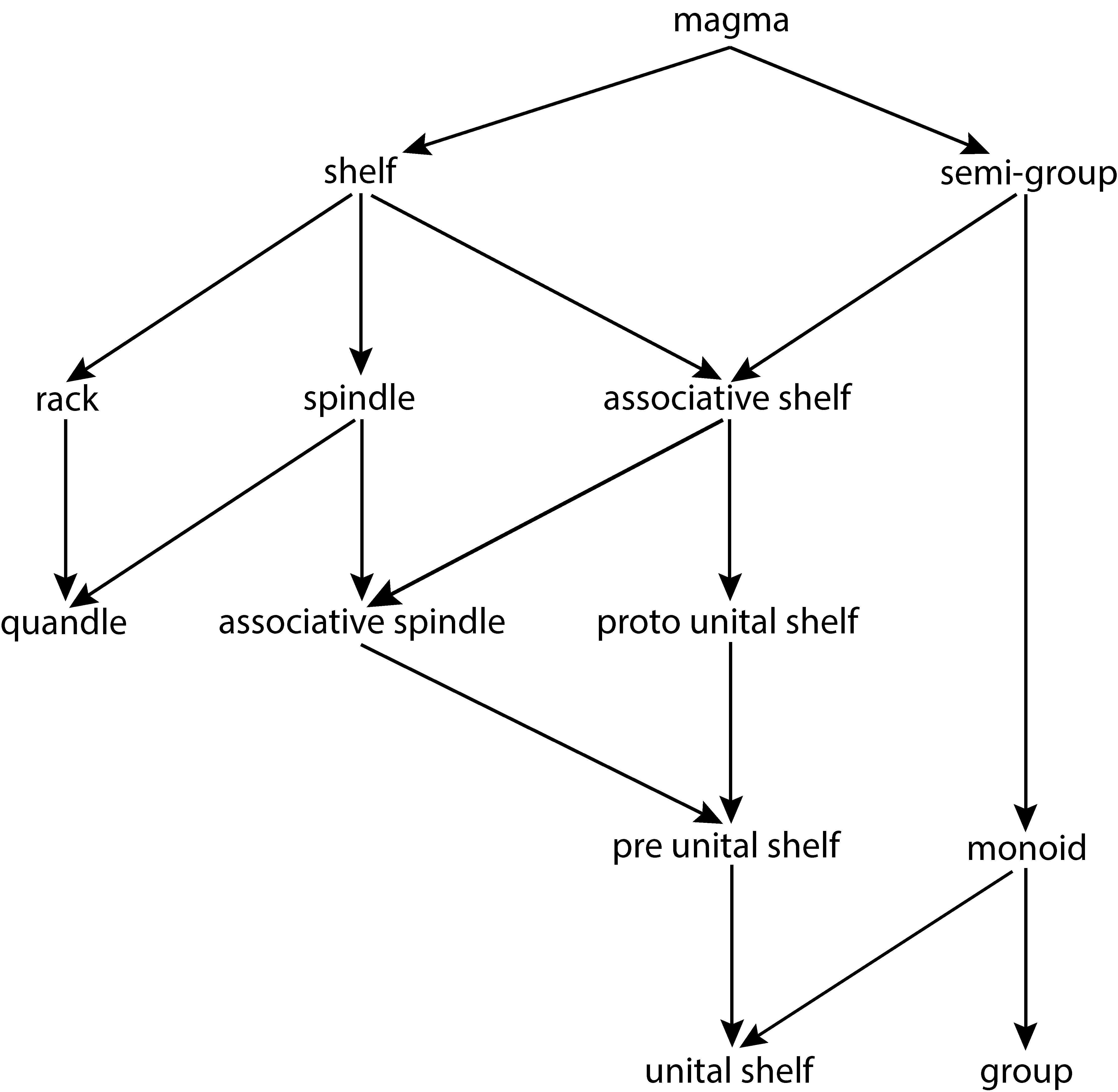}
\caption{Relations between the algebraic structures.}
\label{firstfig}
\end{figure}

\subsection{Lbo homology}

It is well known that to construct a homology theory it is enough to construct a pre-simplicial module (also known as semi-simplicial module). The pre-simplicial module allows for a chain complex to be constructed by defining the boundary map as the alternating sum of the face maps of the pre-simplicial module \cite{Lod}. Lbo homology is developed in the same way.

\begin{definition}
    A {\bf pre-simplicial module} $(C_n,d_{i,n})$ consists of a sequence of $R$-modules $C_n$ over a ring $R$ for $n \geq 0$, and face maps $d_{i,n}:C_n \longrightarrow C_{n-1}$ for all $0 \leq i \leq n$ such that for $i<j$,
    \begin{equation*}
        d_{i,n} \circ d_{j,n+1} = d_{j-1,n} \circ d_{i,n+1}.\footnote{As is usually done, to simplify notation the second index of the face maps will be omitted in the rest of this note.}
    \end{equation*}
\end{definition}

Now let $\partial_n:C_n \longrightarrow C_{n-1}$ and for all $x\in C_n$, let
\begin{equation*}
    \partial_n(x) = \sum_{i=0}^n (-1)^i d_i(x).
\end{equation*}

It is not difficult to observe that $\partial_n \circ \partial_{n+1} = 0$, so that $(C_n,\partial_n)$ is a chain complex. Following is the same construction in the context of lbo homology.

Let $(X,*)$ be an associative shelf and $C_n=\mathbb{Z}X^{n+1}$ for $n \geq 0$ and trivial otherwise. Let $d_i:C_n \longrightarrow C_{n-1}$ for $0 \leq i \leq n$ be given by,

\begin{equation}\label{1.1}
d_{i}(x_0,x_1,\cdots,x_n)=
\begin{cases*}
(x_0*x_1,x_2,\cdots,x_{n-1},x_n*x_0) & if i=0,\\
(x_n*x_0,x_1,\cdots,x_{n-2},x_{n-1}*x_n) & if i=n,\\
(x_0,x_1,\cdots,x_{i-2},x_{i-1}*x_i,x_i*x_{i+1},x_{i+2},\cdots,x_n) & else.
\end{cases*}
\end{equation}

\begin{figure}[ht]
	\includegraphics[scale=0.25]{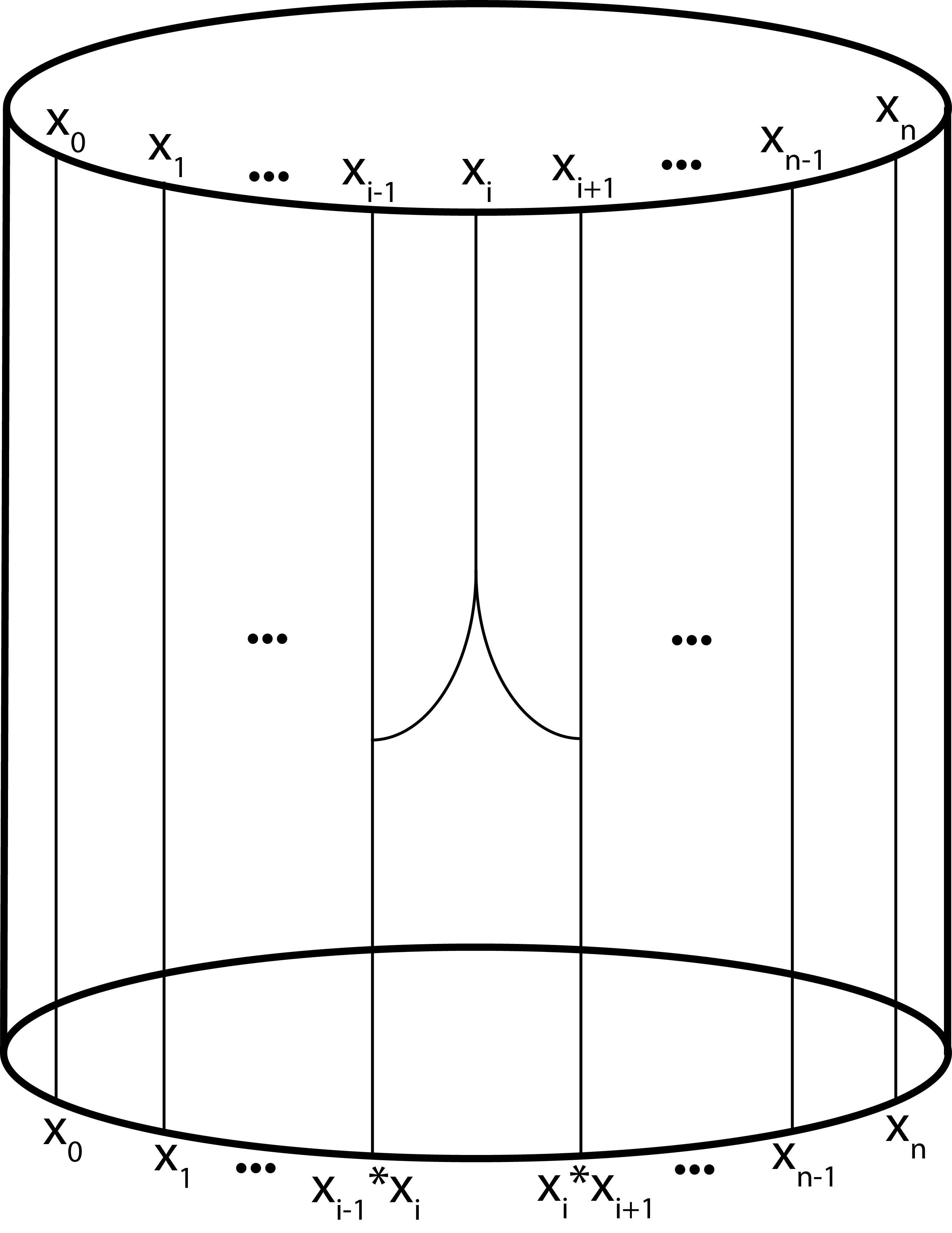}
	\caption{Graphical interpretation of the $i^{th}$ face map of lbo homology.}
	\label{graphical interpretation of the face maps}
\end{figure}

The next thing to prove is that $(C_n,d_i)$ is a pre-simplicial module. But before proving this, some examples are considered to illustrate how the face maps use the axioms of an associative shelf. A non-trivial case in the above definition would occur when $n=1$. Part (1) of Example \ref{examples of computation of face maps} explains that case. 

\begin{example}\label{examples of computation of face maps}
Let $(x_0,x_1,...,x_n) \in \mathbb{Z}X^{n+1}$.
\begin{enumerate}
    \item{For $n=1$, $d_0(x_0,x_1) = x_0*x_1*x_0$, and $d_1(x_0,x_1) = x_1*x_0*x_1$. Here, multiplication on the left and then on the right is the same as multiplication on the right and then on the left as $(X,*)$ is a semi-group.}

    \item{Let $n=2,i=0,j=2$. Then $d_0 \circ d_2 (x_0,x_1,x_2) = d_0(x_2*x_0,x_1*x_2) = (x_2*x_0)*(x_1*x_2)*(x_2*x_0).$ $d_1 \circ d_0(x_0,x_1,x_2) = d_1(x_0*x_1,x_2*x_0) = (x_2*x_0)*(x_0*x_1)*(x_2*x_0)$.}
    
    \item{Let $n=3,i=1,j=2$. Then $d_1 \circ d_2 (x_0,x_1,x_2,x_3) = d_1 (x_0, x_1*x_2,x_2*x_3) = (x_0*(x_1*x_2),(x_1*x_2)*(x_2*x_3))$. $d_1 \circ d_1 (x_0,x_1,x_2,x_3) = d_1(x_0*x_1,x_1*x_2,x_3) = ((x_0*x_1)*(x_1*x_2),(x_1*x_2)*x_3).$}

\end{enumerate}

\end{example}

The example above implies that the axioms necessary for $(C_n,d_i)$ to be a pre-simplicial module are the associativity axiom and $a*b*b*c=a*b*c$ for $a,b,c \in X$. The second axiom holds for associative shelves as,
\begin{equation}\label{lengthaxiom equation}
    a*b*b*c=a*b*c*b*c=a*c*b*c=a*b*c.
\end{equation}

\begin{proposition}\label{existential proof of lbo homology}
    $(C_n,d_i)$ is a pre-simplicial module.
\end{proposition}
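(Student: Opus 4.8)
The plan is to verify the pre-simplicial identity $d_{i} \circ d_{j} = d_{j-1} \circ d_{i}$ for all $0 \le i < j \le n+1$ by direct computation, reducing both sides to a common normal form. First I would exploit associativity to drop all parentheses and regard each entry of a tuple as a \emph{word} in the letters $x_0, \dots, x_{n+1}$; this is legitimate because $(X,*)$ is a semi-group. The only other tool needed is the length-reduction identity $a*b*b*c = a*b*c$ of \eqref{lengthaxiom equation}, which lets me delete a repeated adjacent letter whenever one is produced. With these two rules in hand, the claim becomes the assertion that the two composite face maps produce, entry by entry, words that reduce to the same normal form.

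The conceptual observation that organizes the computation is that every face map has a uniform cyclic description: if we arrange the entries $x_0, \dots, x_{n+1}$ as the vertices of an $(n+2)$-gon, so that $x_{n+1}$ is adjacent to $x_0$, then $d_k$ \emph{contracts} the vertex $x_k$, replacing the three consecutive vertices $x_{k-1}, x_k, x_{k+1}$ (indices read cyclically) by the two entries $x_{k-1}*x_k$ and $x_k*x_{k+1}$. The wrap-around faces $i=0$ and $i=n+1$ are exactly the instances of this rule in which the contracted vertex is incident to the seam between $x_{n+1}$ and $x_0$, which is why they mix $x_0$ with $x_{n+1}$. From this viewpoint the identity says that contracting two distinct vertices $x_i$ and $x_j$ can be performed in either order, the reindexing $d_j \rightsquigarrow d_{j-1}$ on the right-hand side merely compensating for the entry that the first contraction has already deleted.

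I would then split into cases according to the cyclic distance between $x_i$ and $x_j$. When the two vertices are non-adjacent, their contractions act on essentially disjoint stretches of the tuple: if they are at cyclic distance at least three the output tuples coincide after the index shift with no algebraic rule invoked, while if they are at distance two they share a single neighbouring vertex whose two possible bracketings, $(x_i * x_{i+1})*x_{i+2}$ versus $x_i*(x_{i+1}*x_{i+2})$, agree by associativity alone. When $x_i$ and $x_j$ are adjacent, that is when $j = i+1$ or $(i,j) = (0,n+1)$, three consecutive vertices collapse and the shared middle entry acquires a repeated letter; here both composites reduce to the same word precisely by one application of $a*b*b*c = a*b*c$. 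These two adjacent subcases are modelled verbatim on Example \ref{examples of computation of face maps}: part (3) is the interior adjacency $j=i+1$, and part (2) is the wrap-around adjacency $(i,j)=(0,n+1)$.

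The main obstacle, and the part demanding the most care, is the bookkeeping for the wrap-around faces together with the index shift: one must check that the entry produced at the seam by $d_0$, namely $x_{n+1}*x_0$, and the entries produced by a neighbouring face line up at the correct positions of the output tuple once $j$ is lowered to $j-1$. I would therefore treat the boundary instances, $i=0$ with $j$ arbitrary and $j = n+1$ with $i$ arbitrary, as separate checks, matching the first and last entries of the two tuples by hand while the interior entries fall under the generic case already settled. Finally, for the smallest values of $n$ the $(n+2)$-gon degenerates (for instance to a triangle, in which all vertices are pairwise adjacent), so I would confirm those low-dimensional instances directly, exactly as is done for $n=1$ and $n=2$ in Example \ref{examples of computation of face maps}, to be sure no adjacency case has been overlooked.
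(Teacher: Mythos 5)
Your proposal is correct and follows essentially the same route as the paper: direct verification of $d_i\circ d_j=d_{j-1}\circ d_i$ split by the separation of $i$ and $j$, with associativity handling the distance-two overlap, the reduction $a*b*b*c=a*b*c$ handling the adjacent case, no algebra needed at distance three or more, and the small-$n$ and wrap-around instances checked separately. Your cyclic ``vertex contraction'' framing is a nicer way to package the boundary faces (which the paper treats only as ``a shift in indexes'' left to the reader), but it does not change the substance of the argument.
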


\begin{proof}
    The proof of this proposition requires a careful organization of the various possibilities for the sole axiom of a pre-simplicial module and dividing them in to multiple cases.
    \begin{enumerate}
        \item[Case 1:]{For $n=0,1,2,3$, checking each of the small number of possibilities suffices.}
        \item[Case 2:]{Let $n>3,0<i<j<n,j=i+1,$ that is when $i$ and $j$ are adjacent to each other. An arbitrary element in the domain of $d_i$ looks like $(...,x_{i-1},x_i,x_{i+1},x_{i+2},...)$. Then,
        \begin{equation*}
        \begin{split}
         & d_i \circ d_j (...,x_{i-1},x_i,x_{i+1},x_{i+2},...) \\  = \ & d_i \circ d_{i+1} (...,x_{i-1},x_i,x_{i+1},x_{i+2},...)\\ = \ & d_i(...,x_{i-1},x_i*x_{i+1},x_{i+1}*x_{i+2},...)\\ = \ & (...,x_{i-1}*(x_i*x_{i+1}),(x_i*x_{i+1})*(x_{i+1}*x_{i+2}),...). 
        \end{split}    
        \end{equation*}
        
        On the other hand,
        \begin{equation*}
            \begin{split}
                & d_{j-1} \circ d_i (...,x_{i-1},x_i,x_{i+1},x_{i+2},...) \\ = \ & d_i \circ d_i (...,x_{i-1},x_i,x_{i+1},x_{i+2},...)\\ = \ &  d_i(...,x_{i-1}*x_i,x_i*x_{i+1},x_{i+2},...)\\ = \ & (...,(x_{i-1}*x_i)*(x_i*x_{i+1}),(x_i*x_{i+1})*x_{i+2},...).      
            \end{split}
        \end{equation*}
           
        Under the axioms of associative shelves and using \ref{lengthaxiom equation} it follows that both sides are equal.}
        \item[Case 3:]{Let $n>3,0<i<j<n,j=i+2.$ Now $i$ and $j$ are neighbors but not adjacent. Let $(...,x_{i-1},x_i,x_{i+1},x_{i+2},x_{i+3},...)$ be an arbitrary element in the domain of $d_i$. Then,
        \begin{equation*}
            \begin{split}
                & d_i \circ d_j (...,x_{i-1},x_i,x_{i+1},x_{i+2},x_{i+3},...) \\ = \ & d_i \circ d_{i+2} (...,x_{i-1},x_i,x_{i+1},x_{i+2},x_{i+3},...)\\ = \ & d_i(...,x_{i-1},x_i,x_{i+1}*x_{i+2},x_{i+2}*x_{i+3},...)\\ = \ & (...,x_{i-1}*x_i,x_i*(x_{i+1}*x_{i+2}),x_{i+2}*x_{i+3},...).      
            \end{split}
        \end{equation*}
        On the other hand,
        \begin{equation*}
            \begin{split}
               & d_i \circ d_j (...,x_{i-1},x_i,x_{i+1},x_{i+2},x_{i+3},...) \\ = \ & d_{i+1} \circ d_i (...,x_{i-1},x_i,x_{i+1},x_{i+2},x_{i+3},...)\\ = \ & d_{i+1}(...,x_{i-1}*x_i,x_i*x_{i+1},x_{i+2},x_{i+3},...)\\ = \ & (...,x_{i-1}*x_i,(x_i*x_{i+1})*x_{i+2},x_{i+2}*x_{i+3},...).  
            \end{split}
        \end{equation*}}
        
        In this case, associativity is the only axiom necessary to show the equality of the two expressions.
        \item[Case 4:]{Let $n>3,0<i<j<n,j-i>2$. This case is almost trivial as the $d_i$ face map and the $d_j$ face map do not interact with each other at all and therefore the axiom of a pre-simplicial module follows immediately without the need of any axioms of an associative shelf.}
    \end{enumerate}
    
    The cases when $j = n$, that is when $j$ is at the right end is similar except a shift in indexes and are left for the reader to verify.

\end{proof}

The above proposition and defining the boundary maps $\partial_n$ in the usual way as the alternate sum of the face maps of the pre-simplicial module leads on to the following theorem.

\begin{theorem}
    $(C_n,\partial_n)$ is a chain complex.
\end{theorem}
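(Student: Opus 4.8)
The plan is to recognize that this theorem is a purely formal consequence of Proposition \ref{existential proof of lbo homology}: once $(C_n,d_i)$ is known to be a pre-simplicial module, the boundary operators $\partial_n = \sum_{i=0}^{n}(-1)^i d_i$ automatically satisfy $\partial_n \circ \partial_{n+1}=0$, and this holds for \emph{any} pre-simplicial module, independent of the specific shape of the face maps. All the algebraic content particular to associative shelves, namely the associativity axiom and the length-reducing identity \ref{lengthaxiom equation}, has already been absorbed into establishing the simplicial identity $d_i \circ d_j = d_{j-1} \circ d_i$ for $i<j$. I would therefore present the argument as the standard computation and stress that no further properties of $*$ are invoked.

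Concretely, I would first expand the composite as a double sum,
\begin{equation*}
\partial_n \circ \partial_{n+1} = \sum_{i=0}^{n} \sum_{j=0}^{n+1} (-1)^{i+j}\, d_i \circ d_j,
\end{equation*}
and split it into the terms with $i<j$ and those with $i \geq j$. To the first group I would apply the defining relation of a pre-simplicial module, replacing each $d_i \circ d_j$ with $i<j$ by $d_{j-1}\circ d_i$. I would then reindex this transformed group by setting $j' = j-1$, so that the constraint $i<j$ becomes $i \leq j'$ and the sign acquires an extra factor since $(-1)^{i+j} = -(-1)^{i+j'}$. After relabelling the dummy indices, the first group becomes the exact negative of the second group, the terms are matched termwise, and the two sums cancel, yielding $\partial_n \circ \partial_{n+1} = 0$. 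This is precisely the assertion that $(C_n,\partial_n)$ is a chain complex.

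The point I would emphasize is that there is no genuine obstacle in this final step: the real work was completed in Proposition \ref{existential proof of lbo homology}, whose case analysis over the adjacency of the indices $i$ and $j$ verified the simplicial identity using associativity together with \ref{lengthaxiom equation}. The only thing requiring care in the present argument is the bookkeeping of the signs and the index shift during the reindexing, which is entirely routine and, as the excerpt already notes, "not difficult to observe."
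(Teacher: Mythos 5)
Your proposal is correct and follows exactly the route the paper takes: the paper's "proof" consists of invoking Proposition \ref{existential proof of lbo homology} together with the standard observation (stated just after the definition of a pre-simplicial module) that the alternating sum of the face maps squares to zero. Your explicit double-sum expansion, application of $d_i\circ d_j=d_{j-1}\circ d_i$ for $i<j$, and reindexing $j'=j-1$ is just the routine verification the paper leaves implicit, carried out correctly.
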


The homology groups obtained from the chain complex in the above theorem will be called the {\bf lbo homology groups}. The boundary maps of this homology are cyclic in nature. It turns out that by using similar face maps but not in a cyclic manner, it is possible to form a different homology theory. This homology theory is described briefly in the following subsection. 

It is important to note here that while the associativity axiom is always necessary for lbo homology, it is possible to exchange the right self-distributivity axiom with idempotence. Therefore, lbo homology also works if the algebraic structure of interest is an idempotent semi-group. Although there is a non-trivial intersection between associative shelves and idempotent semi-groups in the form of associative spindles, there is no containment in either direction. See Figure \ref{Relations between associative shelves and idempotent semi-groups}. Further, one can also work with semi-groups satisfying the axiom $a*b*b*c = a*b*c$ for all $a,b,c \in$ semi-group.

\begin{figure}[ht]
\includegraphics[scale=0.25]{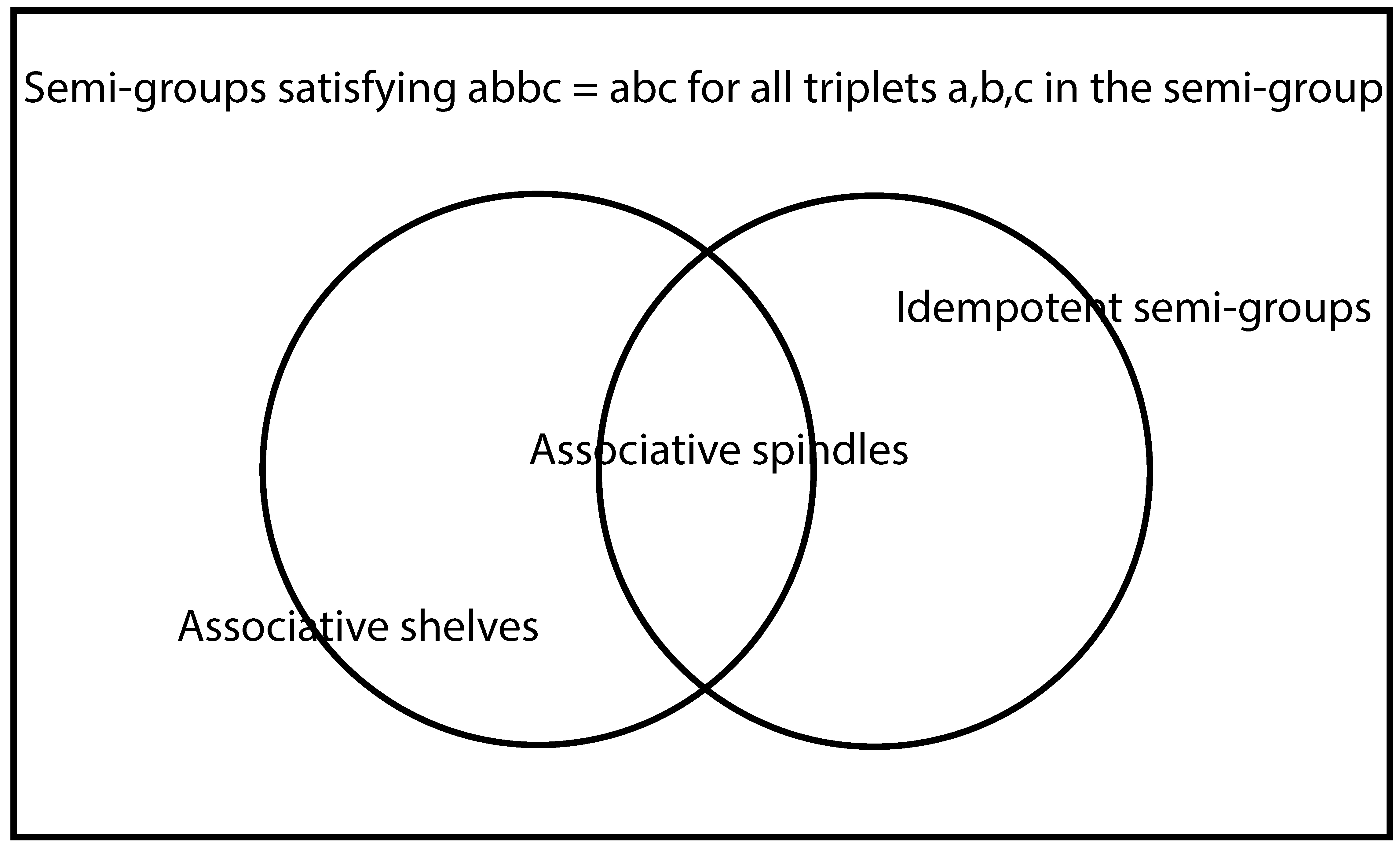}
\caption{Relations between associative shelves and idempotent semi-groups.}
\label{Relations between associative shelves and idempotent semi-groups}
\end{figure}

Example \ref{examples of computation of face maps} part (1), illustrates the face maps $d_0$ and $d_1$ when $n=1$. It follows that $\partial_1(x_0,x_1) = x_0*x_1*x_0 - x_1*x_0*x_1$, for $(x_0,x_1) \in \mathbb{Z}X^2$. This implies that lbo homology measures {\it how far} a proto unital shelf or idempotent semi-group is from being commutative. In particular, when the algebraic structure under consideration is finite and commutative, the zero homology group is $\mathbb{Z}^{|X|}$. 

It would be useful to extend the pre-simplicial module leading on to lbo homology to a simplicial module for the useful implications. I do not know how to do it. The pre-simplicial module leading on to lbo homology, however, can be extended to a {\it very weak simplicial module} for special families of associative shelves and idempotent semi-groups. The notion of a very weak simplicial module was introduced in \cite{Prz1,Prz2}.

\begin{definition}
    A {\bf simplicial module} $(C_n,d_{i,n},s_{j,n})$ consists of a sequence of $R$-modules $C_n$ over a ring $R$ for $n \geq 0$, face maps $d_{i,n}:C_n \longrightarrow C_{n-1}$, and degeneracy maps $s_{j,n}:C_n \longrightarrow C_{n+1}$ for all $0 \leq i,j \leq n$ such that,
    \begin{enumerate}
        \item{$d_{i,n} \circ d_{j,n+1} = d_{j-1,n} \circ d_{i,n+1}, \ \ for \ i<j.$}
        \item{$s_{i,n} \circ s_{j,n-1} = s_{j+1,n} \circ s_{i,n-1}, \ \ for \ i \leq j.$}
        \item{$d_{i,n} \circ s_{j,n-1} =
        \begin{cases*}
            s_{j-1,n-1} \circ d_{i,n}, & for $i<j$,\\
            s_{j,n-1} \circ d_{i-1,n}, & for $i > j+1$.
        \end{cases*}$}
        \item{$d_{i,n+1} \circ s_{j,n} = d_{i+1,n+1} \circ s_{j,n} = Id_{C_n}.$\footnote{As for pre-simplicial modules, to simplify notation the second index of the face maps will be omitted in the rest of this note.}}    
    \end{enumerate}
\end{definition}

A {\bf very weak simplicial module} need not satisfy axiom 4 in the above definition. Now consider the pre-simplicial module $(C_n,d_{i,n})$ leading on to lbo homology. To convert it to a very weak simplicial module a degeneracy map is necessary which satisfies axioms 2 and 3 in the last definition.

\begin{definition}
	Let $(X,*)$ be a magma. If there exists an element $e \in X$ such that $x * e = e = e * x$ for all $x\in X$ then $e$ is called a {\bf zero}. 
\end{definition}

There are many associative shelves and idempotent semi-groups with zeros. Table \ref{IS and AS} shows one example of each. In particular, any associative shelf or idempotent semi-group can be extended to another associative shelf or idempotent semi-group by adding a zero element. This follows as both the associative axiom and the self-distributive axiom work for triplets involving the zero element.

\begin{table}[h]
\caption{An idempotent semi-group and an associative shelf.}
\label{IS and AS}
	\begin{minipage}{0.5\textwidth}
		\centering
		\begin{tabular}{c | c c c c}
			$*$&0&1&2&3 \\
			\hline
			0&0&0&0&0\\
			1&0&1&1&3\\
			2&0&1&2&3\\
			3&0&1&1&3\\	
		\end{tabular}
	\end{minipage}
	\hfillx
	\begin{minipage}{0.5\textwidth}
		\centering
		\begin{tabular}{c|c c c c}
			$*$&0&1&2&3\\
			\hline
			0&0&0&0&0\\
			1&0&0&1&1\\
			2&0&0&2&2\\
			3&0&0&2&3\\
		\end{tabular}
	\end{minipage}
\end{table}

Let $(X,*)$ be an associative shelf or an idempotent semi-group with zero element $e$. Consider homomorphisms (degeneracy maps) $s_{j,n}:C_n \longrightarrow C_{n+1}$ for all $0 \leq j \leq n$ given by
\begin{equation*}
    s_{j,n}(x_0,x_1,...,x_n) = (x_0,x_1,...,x_{j-1},e,e,x_{j+1},...,x_n).
\end{equation*}

\begin{proposition}
    $(C_n,d_i,s_j)$, the pre-simplicial module leading on to lbo homology along with the $s_j$ maps defined above is a very weak simplicial module.
\end{proposition}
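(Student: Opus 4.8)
The plan is to verify axioms 2 and 3 of a simplicial module directly. Axiom 1 is precisely the pre-simplicial identity established in Proposition \ref{existential proof of lbo homology}, and axiom 4 is not required for a \emph{very weak} simplicial module, so these two relations are all that remain. The only algebraic facts I will use are the defining properties of the zero element, $x*e = e = e*x$ for all $x \in X$; in particular $e$ is idempotent and any product having $e$ as a factor collapses to $e$. Since each $s_j$ merely deletes the entry $x_j$ and inserts the block $e,e$ in its place, the bulk of the argument is careful index bookkeeping, and the zero property enters only where the multiplication window of a face map meets an inserted $e$.

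For axiom 2, $s_i \circ s_j = s_{j+1} \circ s_i$ with $i \le j$, no product of genuine (non-zero) elements is ever formed: both composites simply insert two disjoint blocks $e,e$, and comparing the final index positions shows the two tuples agree. The only point needing a glance is $i = j$, where one inserted block abuts the other and both sides produce the string $e,e,e$.

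For axiom 3 I would split along the two prescribed ranges $i<j$ and $i>j+1$, and within each distinguish whether $d_i$ is a middle face map or one of the cyclic maps. Here $i<j\le n$ forces $i \ne n+1$, so only $d_0$ can occur cyclically in the first range, and dually $i>j+1\ge 1$ forces $i \ne 0$, so only $d_{n+1}$ can occur in the second. When the window of $d_i$ is disjoint from the inserted block the identity reduces to the same bookkeeping as in axiom 2, the block produced by $s_j$ on one side matching the block produced by $s_{j-1}$ (respectively $s_j$) on the other, with no algebra needed. The genuinely algebraic boundary subcases are $i=j-1$ and $i=j+2$, where $d_i$ multiplies a real entry against an inserted $e$; the zero property collapses $x_{j-1}*e$ (respectively $e*x_{j+1}$) to $e$, and the two sides then coincide.

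The step I expect to be the main obstacle is the interaction of the cyclic face maps with a degeneracy placed at an end of the tuple, namely $d_0$ with $j=1$ or $j=n$ and, dually, $d_{n+1}$ with $j=0$ or $j=n-1$. In these configurations the wrap-around product, $x_{n+1}*x_0$ for $d_0$ or $x_n*x_{n+1}$ for $d_{n+1}$, involves an inserted $e$ at the far end of the tuple, so annihilation must be invoked at a position remote from where the degeneracy acted. The observation that makes every such case close is that a product containing $e$ never leaks a non-trivial factor, so on both the $d\circ s$ and the $s\circ d$ sides the two copies of $e$ land in matching slots; once this is checked for $d_0$ and $d_{n+1}$ at each end, the verification is complete and $(C_n, d_i, s_j)$ is a very weak simplicial module.
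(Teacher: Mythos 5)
Your proposal is correct and follows essentially the same route as the paper's proof: axiom 1 is inherited from Proposition \ref{existential proof of lbo homology}, and axioms 2 and 3 are verified by cases on the relative position of $i$ and $j$, with the zero property entering only at the boundary subcases ($x*e=e$ as a right zero when $i=j-1$, $e*x=e$ as a left zero when $i=j+2$) and pure index bookkeeping elsewhere. The one difference is that you explicitly close the cyclic end cases where $d_0$ or the top face map meets a degeneracy at the ends of the tuple, which the paper dismisses as ``a shift in indexes'' left to the reader.
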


\begin{proof}
    To prove the proposition, it is necessary to verify the three axioms of a very weak simplicial module. The first axiom follows from Proposition \ref{existential proof of lbo homology}. For the second and the third axioms, the various possibilities are tested.
    
    \begin{itemize}

    \item[A2:]{$s_i \circ s_j = s_{j+1} \circ s_i,$ for $i \leq j.$\
    \begin{enumerate}
        \item[Case 1:]{As before, for $n = 0,1,2,3$, checking all the possibilities one by one suffices.}
        
        \item[Case 2:]{Let $ n > 3,0<i=j<n.$ Let $(...,x_{i-1},x_i,x_{i+1},x_{i+2},...)$ be an arbitrary element in the domain of $s_j.$ Then,
        \begin{equation*}
        \begin{split}
         & s_i \circ s_j (...,x_{i-1},x_i,x_{i+1},x_{i+2},...) \\  = \ & s_i \circ s_i (...,x_{i-1},x_i,x_{i+1},x_{i+2},...)\\ = \ & s_i(...,x_{i-1},e,e,x_{i+1},x_{i+2},...)\\ = \ & (...,x_{i-1},e,e,e,x_{i+1},x_{i+2},...). 
        \end{split}    
        \end{equation*}
        On the other hand,
        \begin{equation*}
        \begin{split}
         & s_{j+1} \circ s_i (...,x_{i-1},x_i,x_{i+1},x_{i+2},...) \\  = \ & s_{i+1} \circ s_i (...,x_{i-1},x_i,x_{i+1},x_{i+2},...)\\ = \ & s_{i+1}(...,x_{i-1},e,e,x_{i+1},x_{i+2},...)\\ = \ & (...,x_{i-1},e,e,e,x_{i+1},x_{i+2},...). 
        \end{split}    
        \end{equation*}
        Therefore, both the sides are equal. Note that no additional axioms were needed.}
        
        \item[Case 3:]{Let $ n > 3,0<i<j<n,j = i+1.$ Let $(...,x_{i-1},x_i,x_{i+1},x_{i+2},...)$ be an arbitrary element in the domain of $s_j.$ Then,
        \begin{equation*}
        \begin{split}
         & s_i \circ s_j (...,x_{i-1},x_i,x_{i+1},x_{i+2},...) \\  = \ & s_i \circ s_{i+1} (...,x_{i-1},x_i,x_{i+1},x_{i+2},...)\\ = \ & s_i(...,x_{i-1},x_i,e,e,x_{i+2},...)\\ = \ & (...,x_{i-1},e,e,e,e,x_{i+2},...). 
        \end{split}    
        \end{equation*}
        On the other hand,
        \begin{equation*}
        \begin{split}
         & s_{j+1} \circ s_i (...,x_{i-1},x_i,x_{i+1},x_{i+2},...) \\  = \ & s_{i+2} \circ s_i (...,x_{i-1},x_i,x_{i+1},x_{i+2},...)\\ = \ & s_{i+2}(...,x_{i-1},e,e,x_{i+1},x_{i+2},...)\\ = \ & (...,x_{i-1},e,e,e,e,x_{i+2},...). 
        \end{split}    
        \end{equation*}
        Once again, both the sides are equal without the need of any additional axiom.}
        The computations for the cases when $j>i+1$ are similar and equality holds as there is no interaction between the two maps like in the previous case.
    \end{enumerate}
    }

    \item[A3:]{$d_{i,n} \circ s_{j,n-1} =
        \begin{cases*}
            s_{j-1,n-1} \circ d_{i,n}, & for $i<j$,\\
            s_{j,n-1} \circ d_{i-1,n}, & for $i > j+1$.
        \end{cases*}$\
    \begin{enumerate}
        \item[Case 1:]{Once again for $n=0,1,2,3$, the small number of possibilities are verified.}
        
        \item[Case 2:]{Let $n>3,0<i<j<n,j=i+1,$ Let $(...,x_{i-1},x_i,x_{i+1},x_{i+2},...)$ be an arbitrary element in the domain of $d_i,s_j$. Then,
        \begin{equation*}
        \begin{split}
         & d_i \circ s_j (...,x_{i-1},x_i,x_{i+1},x_{i+2},...) \\  = \ & d_i \circ s_{i+1} (...,x_{i-1},x_i,x_{i+1},x_{i+2},...)\\ = \ & d_i(...,x_{i-1},x_i,e,e,x_{i+2},...)\\ = \ & (...,x_{i-1}*x_i,x_i*e,e,x_{i+2},...). 
        \end{split}    
        \end{equation*}
        On the other hand,
        \begin{equation*}
        \begin{split}
         & s_{j-1} \circ d_i (...,x_{i-1},x_i,x_{i+1},x_{i+2},...) \\  = \ & s_i \circ d_i (...,x_{i-1},x_i,x_{i+1},x_{i+2},...)\\ = \ & s_i(...,x_{i-1}*x_i,x_i*x_{i+1},x_{i+2},...)\\ = \ & (...,x_{i-1}*x_i,e,e,x_{i+2},...). 
        \end{split}    
        \end{equation*}
        Therefore, both the sides are equal as $e$ is a right zero.
        }
        
        \item[Case 3:]{Let $n>3,0<i<j<n,j=i+2,$ Let $(...,x_{i-1},x_i,x_{i+1},x_{i+2},x_{i+3},...)$ be an arbitrary element in the domain of $d_i,s_j$. Then,
        \begin{equation*}
        \begin{split}
         & d_i \circ s_j (...,x_{i-1},x_i,x_{i+1},x_{i+2},x_{i+3},...) \\  = \ & d_i \circ s_{i+2} (...,x_{i-1},x_i,x_{i+1},x_{i+2},x_{i+3},...)\\ = \ & d_i(...,x_{i-1},x_i,x_{i+1},e,e,x_{i+3},...)\\ = \ & (...,x_{i-1}*x_i,x_i*x_{i+1},e,e,x_{i+3},...). 
        \end{split}    
        \end{equation*}
        On the other hand,
        \begin{equation*}
        \begin{split}
         & s_{j-1} \circ d_i (...,x_{i-1},x_i,x_{i+1},x_{i+2},x_{i+3},...) \\  = \ & s_{i+1} \circ d_i (...,x_{i-1},x_i,x_{i+1},x_{i+2},x_{i+3},...)\\ = \ & s_{i+1}(...,x_{i-1}*x_i,x_i*x_{i+1},x_{i+2},x_{i+3},...)\\ = \ & (...,x_{i-1}*x_i,x_i*x_{i+1},e,e,x_{i+3},...). 
        \end{split}    
        \end{equation*}
        Equality holds once again without the need of any additional axiom.
        }
        The remaining cases for the first part of the axiom are similar to the last case as there is no interaction between the two maps and therefore are not computed separately.
    \end{enumerate}
    The computations for the second part of the axiom are similar. However, it should be noted that in the first part of the axiom, right zero was required. For the second part of the axiom to hold, left zero is necessary. Moreover, verification of the cases for $j=n$ just require a shift in indexes and are left for the reader to verify.
    }
    \end{itemize}
\end{proof}

It is possible to construct degenerate sub-complexes for very weak simplicial modules by dividing by the relation $d_i \circ s_i - d_{i+1} \circ s_i$ analogous to degenerate sub-complexes for simplicial modules \cite{Prz2}. 

\subsection{A non-cyclic version of lbo homology}

As was mentioned in the last subsection, for the same algebraic structures it is possible to form a different (not completely!) homology theory which is non-cyclic and less interesting. The pre-simplicial module leading on to this version of lbo homology is as follows.

Let $(X,*)$ be a semi-group satisfying $a*b*b*c = a*b*c$ and $C_n^{nc}=\mathbb{Z}X^{n+1}$ for $n \geq 0$ and trivial otherwise. Let $d_i:C_n^{nc} \longrightarrow C_{n-1}^{nc}$ for $0 \leq i \leq n$ be given by,

\begin{equation}\label{1.2}
d_{i}(x_0,x_1,\cdots,x_n)=
\begin{cases*}
(x_0*x_1,x_2,\cdots,x_{n-1},x_n) & if i=0,\\
(x_0,x_1,\cdots,x_{n-2},x_{n-1}*x_n) & if i=n,\\
(x_0,x_1,\cdots,x_{i-2},x_{i-1}*x_i,x_i*x_{i+1},x_{i+2},\cdots,x_n) & else.
\end{cases*}
\end{equation}

\ref{1.1} and \ref{1.2} have different face maps in the extreme cases. It is not very difficult to observe that all the conditions of a pre-simplicial module are satisfied and therefore after defining boundary maps $\partial_n^{nc}$ the theorem below follows.

\begin{theorem}
    $(C_n^{nc},\partial_n^{nc})$ is a chain complex.
\end{theorem}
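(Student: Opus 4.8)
The plan is to reduce the theorem to the single pre-simplicial identity $d_i \circ d_j = d_{j-1} \circ d_i$ for $i < j$, since the observation recorded just before Proposition \ref{existential proof of lbo homology} guarantees that once $(C_n^{nc}, d_i)$ is established to be a pre-simplicial module, the alternating sum $\partial_n^{nc} = \sum_{i=0}^n (-1)^i d_i$ automatically satisfies $\partial_n^{nc} \circ \partial_{n+1}^{nc} = 0$. Thus the entire content is the verification of the pre-simplicial axiom for the face maps \ref{1.2}, and I would organize this exactly as in the proof of Proposition \ref{existential proof of lbo homology}, partitioning by the separation distance $j - i$ (with the small values $n = 0,1,2,3$ dispatched by direct inspection).

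First I would observe that the interior face maps of \ref{1.2}, those with $0 < i < n$, coincide verbatim with the interior face maps of \ref{1.1}. Consequently, whenever both indices of the composite are interior, the computations are identical to Cases 2, 3, and 4 of Proposition \ref{existential proof of lbo homology}: for $j = i+1$ one invokes associativity together with the length axiom $a*b*b*c = a*b*c$ (here assumed directly rather than derived via \ref{lengthaxiom equation}), for $j = i+2$ one needs only associativity, and for $j - i > 2$ the two maps act on disjoint coordinates so no axiom is required. These cases therefore transfer with no change.

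The only new work concerns the extreme maps $d_0$ and $d_n$, which in \ref{1.2} \emph{truncate} rather than \emph{wrap around} as they do in \ref{1.1}. I would dispatch these by the same distance trichotomy. For the adjacent left pair $(i,j) = (0,1)$, both $d_0 \circ d_1$ and $d_0 \circ d_0$ collapse the first three entries to $x_0 * x_1 * x_2$ after one application of associativity and the length axiom; the adjacent right pair $(i,j) = (n, n+1)$ is the mirror image and closes the same way. For $(i,j) = (0,2)$ and its right-hand analogue only associativity is needed, matching Case 3, while for $i = 0$ and $j = n+1$ with $n > 1$ the two maps touch disjoint ends of the tuple and commute freely. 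Because the non-cyclic maps never carry a coordinate around the end of the tuple, no genuinely new interaction arises, which is precisely why this case list is strictly shorter than in the cyclic setting.

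The step I expect to require the most care is not any single computation but the bookkeeping at the boundary: one must confirm that the second map is applied to the correctly shortened tuple, since the image of $d_{j,n+1}$ lives in $C_n$ and so its extreme index shifts from $n+1$ to $n$. More importantly, I would explicitly check that every adjacent case is closed using only associativity and the assumed identity $a*b*b*c = a*b*c$. Since the non-cyclic hypothesis is merely that $(X,*)$ is a semi-group satisfying this length axiom, self-distributivity is \emph{not} available, and the derivation in \ref{lengthaxiom equation} must never be silently re-used; in each adjacent case the cancellation $(x_{i-1}*x_i)*(x_i*x_{i+1}) = x_{i-1}*x_i*x_{i+1}$ should follow from associativity and the length axiom alone.
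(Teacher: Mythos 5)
Your proposal is correct and follows exactly the route the paper intends: the paper offers no detailed argument for this theorem beyond observing that the pre-simplicial conditions of \ref{1.2} are verified just as in Proposition \ref{existential proof of lbo homology}, and your case analysis (identical interior cases, truncating rather than wrapping extreme maps, and the explicit caution that $a*b*b*c=a*b*c$ is now a hypothesis rather than a consequence of self-distributivity) fills in precisely those details. Nothing in your argument diverges from or goes beyond what the paper's one-line justification presupposes.
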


In particular, $\partial_1(x_0,x_1) = x_0*x_1 - x_0*x_1 = 0$, for $(x_0,x_1) \in \mathbb{Z}X^2$. Therefore, for a semi-group $(X,*)$ satisfying $a*b*b*c = a*b*c$, $H_0^{nc}(X) = C_0^{nc} =  \mathbb{Z}X.$ 

\section{Geometric realization of the pre-simplicial set leading to lbo homology}

In a pre-simplicial module if the chain modules are replaced with sets (i.e. without the module structure) with appropriate differentials, the resulting structure is known as a {\bf pre-simplicial set} (also known as semi-simplicial complex). A pre-simplicial set leads on to a CW complex which is built by gluing simplexes together using the face maps of the pre-simplicial set \cite{Gie,Hu,Lod,Mil,Prz2}. The advantage of doing this is that the homology groups of a homology theory which is completely developed algebraically can be computed by using  simplicial homology which is very well developed. Moreover, the construction being well defined, the geometric structures obtained at each dimension are invariants of the underlying algebraic structure. The idea, with more details in the context of lbo homology is as follows.

\begin{definition}
    A pre-simplicial set $(X_n,d_{i,n})$ consists of a sequence of sets $X_n$ for $n \geq 0$, and face maps $d_{i,n}:X_n \longrightarrow X_{n-1}$ for all $0 \leq i \leq n$ such that for $i < j$,
    \begin{equation*}
        d_{i,n} \circ d_{j,n+1} = d_{j-1,n} \circ d_{i,n+1}.\footnote{As for pre-simplicial modules, to simplify notation the second index of the face maps will be omitted in the rest of this note.}
    \end{equation*}
\end{definition}

\begin{figure}[ht]
\includegraphics[scale=0.35]{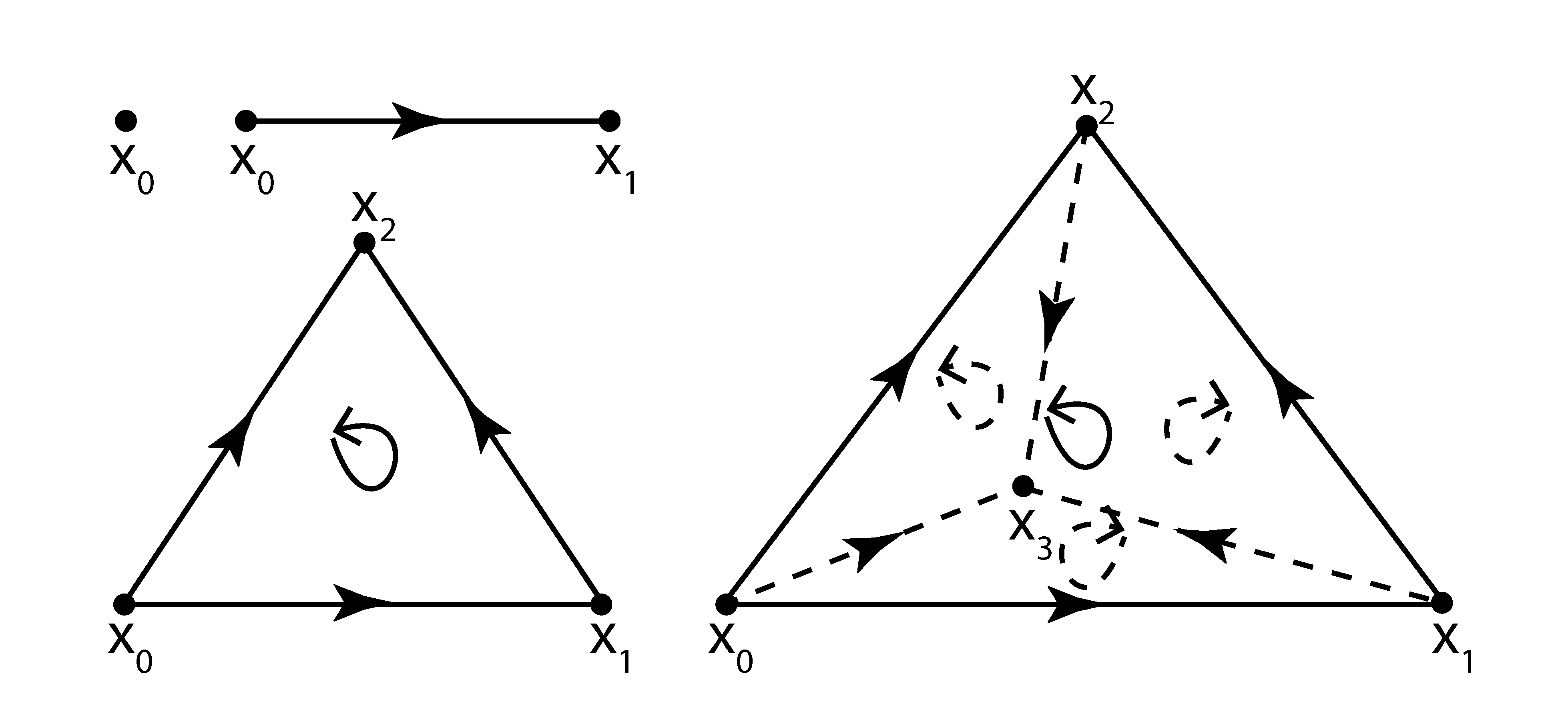}
\caption{The standard $0$, $1$, $2$, and $3$ dimensional simplexes.}
\label{cells of the CW complex}
\end{figure}

A pre-simplicial set leads to a pre-simplicial module by changing the sequence of sets $X_n$ to a sequence of free modules $X_n$ over a ring $R$. The converse is not true in general. Therefore, a pre-simplicial set can be converted in to a homology theory by the technique discussed in the last section. By definition of lbo homology and Proposition \ref{existential proof of lbo homology}, the pre-simplicial module leading on to lbo homology is a pre-simplicial set.

Let $\mathcal{L}$ be a pre-simplicial set. Let the category  $\Delta$ be the co-pre-simplicial space with standard simplices as the objects,
\begin{equation*}
    \Delta^n = \{(y_0,y_1,...,y_n) \in \mathbb{R}^{n+1} \ such \ that \ \sum_{i=0}^n y_i = 1, y_i \geq 0\}.    
\end{equation*}
The morphisms (co-face maps) are defined by $d^i:\Delta^n \longrightarrow \Delta^{n+1}$ given by 
\begin{equation*}
    d^i(y_0,y_1,...,y_n) = (y_0,y_1,...,y_{i-1},0,y_i,..., y_n).
\end{equation*}
The co-face maps satisfy $d^i \circ d^{j-1} = d^j \circ d^i$ for $i<j.$ Then the geometric realization of $\mathcal{L}$ denoted by $|\mathcal{L}|$ is the CW complex associated to $\mathcal{L}$, that is for $x \in X_n$ and $y\in \Delta^{n-1}$,
\begin{equation*}
    |\mathcal{L}| =  \frac{\bigsqcup_{n \geq 0}(X_n \times \Delta^n)}{(x, d^i(y)) = (d_i(x), y)}.  
\end{equation*}
Here $X_n$ has discrete topology and $|\mathcal{L}|$ has quotient topology. Figure \ref{cells of the CW complex} shows the standard $0$, $1$, $2$, and $3$ dimensional cells.

Now let us visualize the first few cells of the CW complex corresponding to lbo homology. For this, let us recollect the first few boundary maps. Let $(x_0,x_1,...x_n) \in X^n.$

\begin{equation*}
    \partial_1(x_0,x_1) = x_0*x_1*x_0 - x_1*x_0*x_1.
\end{equation*}
\begin{equation*}
    \partial_2(x_0,x_1,x_2) = (x_0*x_1,x_2*x_0) - (x_0*x_1,x_1*x_2) + (x_2*x_0,x_1*x_2).
\end{equation*}
\begin{equation*}
\begin{split}
    \partial_3(x_0,x_1,x_2,x_3) & = (x_0*x_1,x_2,x_3*x_0) - (x_0*x_1,x_1*x_2,x_3) \\ & + (x_0,x_1*x_2,x_2*x_3) - (x_3*x_0,x_1,x_2*x_3).
\end{split}
\end{equation*}

\begin{figure}[ht]
	\includegraphics[scale=0.5]{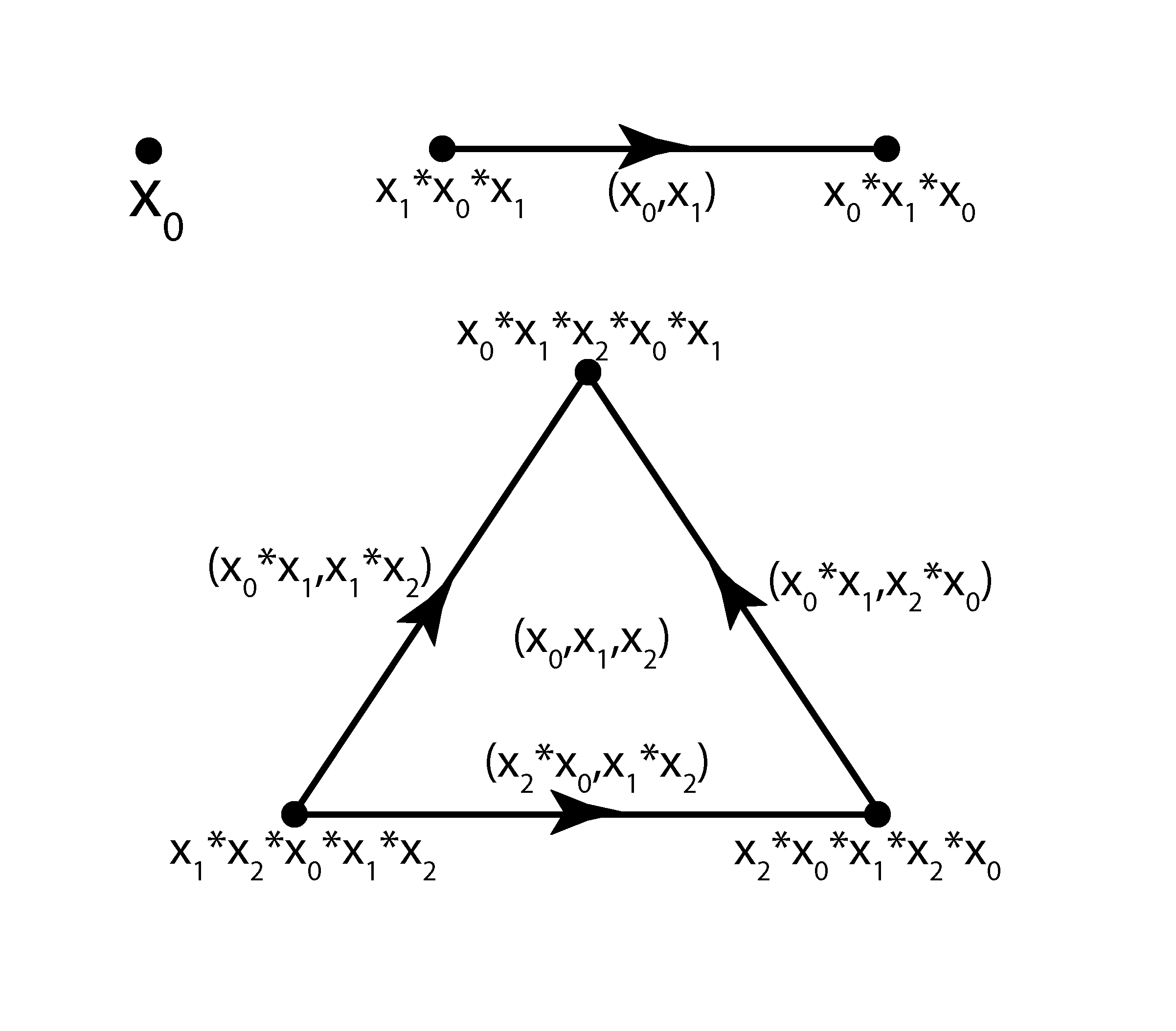}
	\caption{The $0$, $1$, and $2$ dimensional simplexes from lbo homology.}
	\label{cells_from_lbo_homology}
	\end{figure}

Corresponding to lbo homology let $\mathcal{L}$ be the pre-simplicial set defined for a finite associative shelf or a finite idempotent semi-group $(X,*)$ having $n$ elements. Then the $0$-cells of the CW complex are $n$ points, each corresponding to one element from the algebraic structure. The $1$ dimensional cells are edges corresponding to the gluing instructions obtained from the face maps of lbo homology and so on. Figures \ref{cells_from_lbo_homology} and \ref{cell_in_third_dimension_from_lbo_homology} demonstrate the geometric realization of the individual cells obtained from lbo homology in small dimensions. In Figure \ref{cell_in_third_dimension_from_lbo_homology}, $(x_0,x_1,x_2,x_3)$ denotes the $3$ dimensional cell, the blue tuples represent the four triangles, the red ones represent the six edges and the expressions in black correspond to the four vertices.\footnote{For the vertices, the parentheses are not used for simpler notation.} The figure is easily interpreted after the following routine calculation.

\begin{equation*}
\begin{split}
    \partial_3(x_0,x_1,x_2,x_3) &= (x_0*x_1,x_2,x_3*x_0) - (x_0*x_1,x_1*x_2,x_3) \\ & + (x_0,x_1*x_2,x_2*x_3) - (x_3*x_0,x_1,x_2*x_3).
\end{split}
\end{equation*}
\begin{equation*}
\begin{split}
    \partial_2(x_0*x_1,x_2,x_3*x_0) &= (x_0*x_1*x_2,x_3*x_0*x_1) - (x_0*x_1*x_2,x_2*x_3*x_0)\\ &+ (x_3*x_0*x_1,x_2*x_3*x_0).
\end{split}
\end{equation*}
\begin{equation*}
\begin{split}
    \partial_2(x_0*x_1,x_1*x_2,x_3) &= (x_0*x_1*x_2,x_3*x_0*x_1) - (x_0*x_1*x_2,x_1*x_2*x_3)\\ &+ (x_3*x_0*x_1,x_1*x_2*x_3).
\end{split}
\end{equation*}
\begin{equation*}
\begin{split}
    \partial_2(x_0,x_1*x_2,x_2*x_3) &= (x_0*x_1*x_2,x_2*x_3*x_0) - (x_0*x_1*x_2,x_1*x_2*x_3)\\ &+ (x_2*x_3*x_0,x_1*x_2*x_3).
\end{split}
\end{equation*}
\begin{equation*}
\begin{split}
    \partial_2(x_3*x_0,x_1,x_2*x_3) &= (x_3*x_0*x_1,x_2*x_3*x_0) - (x_3*x_0*x_1,x_1*x_2*x_3)\\ &+ (x_2*x_3*x_0,x_1*x_2*x_3).
\end{split}
\end{equation*}
\begin{equation*}
    \partial_1(x_0*x_1*x_2,x_3*x_0*x_1) = (x_0*x_3*x_0*x_1*x_2)-(x_3*x_2*x_3*x_0*x_1).
\end{equation*}
\begin{equation*}
    \partial_1(x_0*x_1*x_2,x_2*x_3*x_0) = (x_0*x_3*x_0*x_1*x_2)-(x_2*x_1*x_2*x_3*x_0).
\end{equation*}
\begin{equation*}
    \partial_1(x_3*x_0*x_1,x_2*x_3*x_0) = (x_3*x_2*x_3*x_0*x_1)-(x_2*x_1*x_2*x_3*x_0).
\end{equation*}
\begin{equation*}
    \partial_1(x_0*x_1*x_2,x_1*x_2*x_3) = (x_0*x_3*x_0*x_1*x_2)-(x_1*x_0*x_1*x_2*x_3).
\end{equation*}
\begin{equation*}
    \partial_1(x_3*x_0*x_1,x_1*x_2*x_3) = (x_3*x_2*x_3*x_0*x_1)-(x_1*x_0*x_1*x_2*x_3).
\end{equation*}
\begin{equation*}
    \partial_1(x_2*x_3*x_0,x_1*x_2*x_3) = (x_2*x_1*x_2*x_3*x_0)-(x_1*x_0*x_1*x_2*x_3).
\end{equation*}

\begin{figure}[ht]
\includegraphics[scale=0.3]{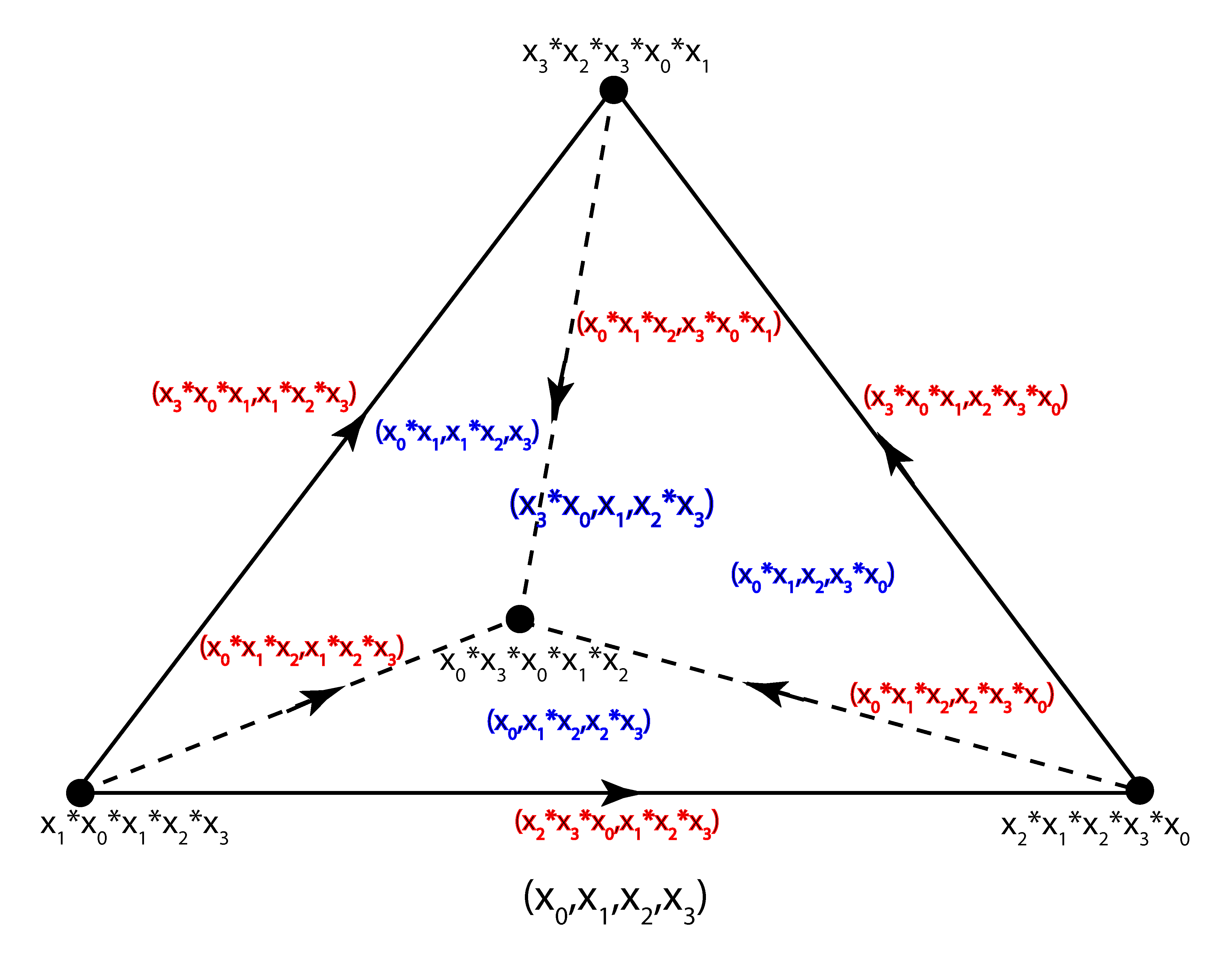}
\caption{The $3$ dimensional simplex from lbo homology.}
\label{cell_in_third_dimension_from_lbo_homology}
\end{figure}

\begin{figure}[ht]
	\includegraphics[scale=0.25]{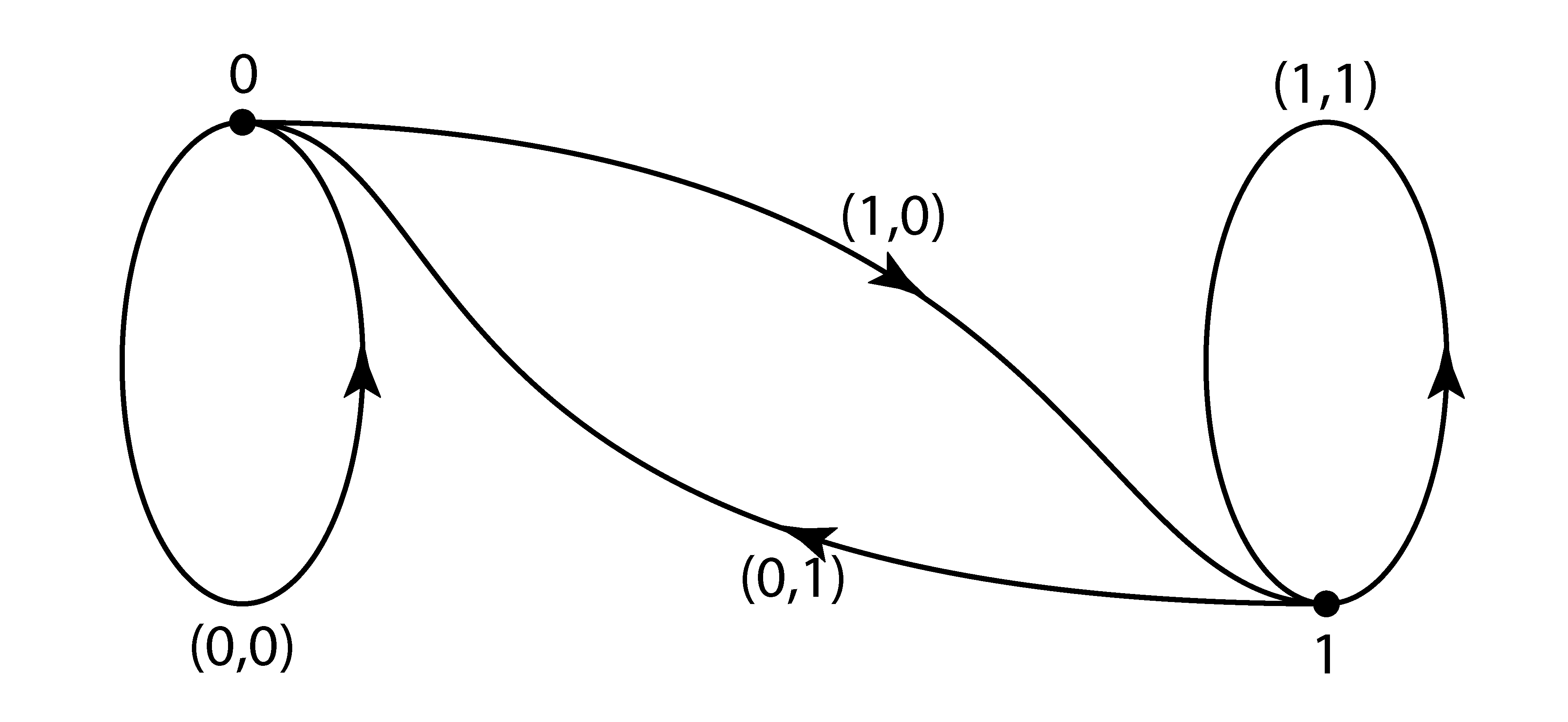}
	\caption{The 1-skeleton of the CW complex.}
	\label{till two dimension}
\end{figure}

Let us now understand the geometric realization for the associative shelf shown in Table \ref{small associative shelf}.  

\begin{wraptable}{r}{0.42 \textwidth}\label{small associative shelf}
\caption{A small associative shelf.}
\centering
\begin{tabular}{c | c c}
		$*$&0&1 \\
		\hline
		0&0&0\\
		1&1&1\\
\end{tabular}
\end{wraptable}

There are two elements in the associative shelf. Therefore, in the geometric realization there are two $0$ dimensional simplexes (vertices) denoted by $0$ and $1$. There are four $1$ dimensional simplexes (edges) and eight $2$ dimensional simplexes (triangles).

$\partial_1(0,0) = 0 - 0,$ and is therefore represented by a loop starting at $0$. Due to same reason, $(1,1)$ is represented by a loop starting at $1$. $\partial_1(0,1) = 0*1*0 - 1*0*1.$ Using the multiplication table of the associative shelf we obtain, $\partial_1(0,1) = 0 - 1.$ Similarly, $\partial_1(1,0) = 1 - 0.$ The geometric realization of the $1$-skeleton is demonstrated in Figure \ref{till two dimension}. The geometric structure is homotopy equivalent to a wedge of three circles $(\mathbb{S}^1 \vee \mathbb{S}^1 \vee \mathbb{S}^1)$ and has $0^{th}$ homology $\mathbb{Z}$ which matches with the $0^{th}$ lbo homology group of the associative shelf. 

Taking into account the second boundary map to visualize the complex up to the third dimension gets a little complicated.

\begin{equation*}
\begin{split}
    & \partial_2(0,0,0) = (0,0) - (0,0) + (0,0). \\
    & \partial_2(0,0,1) = (0,1) - (0,0) + (1,0). \\
    & \partial_2(0,1,0) = (0,0) - (0,1) + (0,1). \\
    & \partial_2(0,1,1) = (0,1) - (0,1) + (1,1). \\
    & \partial_2(1,0,0) = (1,0) - (1,0) + (0,0). \\
    & \partial_2(1,0,1) = (1,1) - (1,0) + (1,0). \\
    & \partial_2(1,1,0) = (1,0) - (1,1) + (0,1). \\
    & \partial_2(1,1,1) = (1,1) - (1,1) + (1,1). \\
\end{split}
\end{equation*}
    
The simplexes $(0,0,0)$ and $(1,1,1)$ fill up the closed curves formed by $(0,0)$ and $(1,1)$ in Figure \ref{till two dimension} and form contractible discs (triangles). The simplexes $(0,0,1)$ and $(1,1,0)$ are symmetric up to change of indices. Figure \ref{hanky} shows the geometric realization of $(0,0,1).$ The simplexes $(0,1,0)$, $(0,1,1)$, $(1,0,0)$, and $(1,0,1)$ are homotopic to cones over the loops $(0,0)$ or $(1,1)$. The resulting structure after gluing all the simplexes appropriately to Figure \ref{till two dimension} looks like a cylinder capped off at both ends with three hollow spaces inside. From the ends of the cylinder with the caps as bases two cones start and are attached to the cylinder along the edges $(0,1)$ and $(1,0)$. In particular, the fundamental group of the geometric realization of $X$ is trivial and therefore $H_1(X) = 0$. In fact, computer calculations show that $H_i(X) = 0$ for $0<i<10$.

\begin{figure}[ht]
	\includegraphics[scale=0.25]{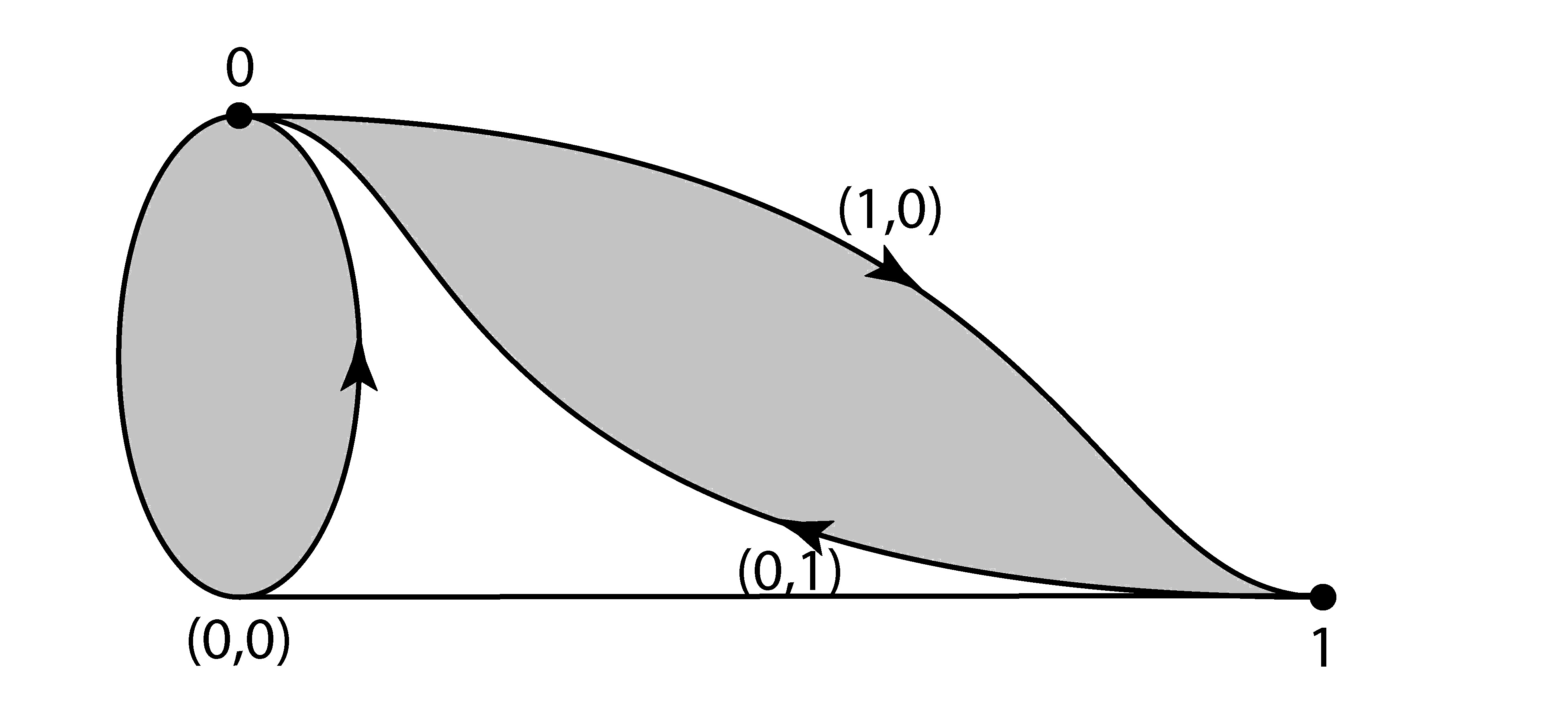}
	\caption{Geometric realization of the simplex $(0,0,1)$.}
	\label{hanky}
\end{figure}

\subsection{A comparison between algebraic and geometric arguments}

This subsection comprises of the interpretation of $H_0(X)$ when $X$ is a proto unital shelf or idempotent semi-group. The interpretation is first done algebraically in the form of Propositions \ref{rem} and \ref{prop 2.2} followed by the interpretation using geometric arguments. In particular, the following discussion provides a simpler computation technique for $H_0(X)$ when $X$ is a finite proto unital shelf or a finite idempotent semi-group. However, before that consider the following observations which prove the equivalence of the commutativity axiom and $\partial_0 = 0$ (braid group relation) in idempotent semi-groups and proto unital shelves. 

\begin{proposition}\label{rem} 
	Let $(X,*)$ be a proto unital shelf or an idempotent semi-group and $a,b \in X$. Then $a*b = b*a$ if and only if $a*b*a = b*a*b.$
\end{proposition}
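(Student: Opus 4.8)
The plan is to treat the two hypotheses on $(X,*)$ separately, since they afford genuinely different simplifications; in both cases the structure is associative (for proto unital shelves by Proposition \ref{proposition 2 Sam C.}), so I would write all products without parentheses.

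For a \emph{proto unital shelf} I expect the statement to collapse to a near-triviality. The key observation is that axiom (2) of Proposition \ref{proposition 1 Sam C.}, namely $a*b = b*(a*b)$, rewrites by associativity as the identity $b*a*b = a*b$, valid for all $a,b \in X$. This identity, together with its image under swapping $a$ and $b$, yields \emph{simultaneously} $a*b*a = b*a$ and $b*a*b = a*b$. Hence $a*b*a = b*a*b$ holds if and only if $b*a = a*b$, which is exactly the claim. Note that this disposes of both implications at once and uses neither axiom (3) nor self-distributivity.

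For an \emph{idempotent semi-group} the short-cut above is unavailable: in a left-zero band $a*b*a = a$ while $b*a = b$, so these differ in general. Here I would instead argue directly, exploiting that every element, and in particular every product, is idempotent. For the forward direction, assuming $a*b = b*a$ I would substitute and absorb repeated letters, $a*b*a = a*(b*a) = a*(a*b) = (a*a)*b = a*b$ and symmetrically $b*a*b = b*a$, so both sides reduce to the common value $a*b = b*a$. For the converse, assuming $a*b*a = b*a*b$, idempotence of the products gives $a*b = a*b*a*b$ and $b*a = b*a*b*a$; feeding in the hypothesis and again absorbing a repeated letter yields $a*b = a*(b*a*b) = a*(a*b*a) = a*b*a$ and likewise $b*a = b*a*b$. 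Combining $a*b = a*b*a$, $b*a = b*a*b$ with $a*b*a = b*a*b$ then forces $a*b = b*a$.

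The routine part throughout is the absorption of a repeated letter (via $a*a = a$, or via $x*y*y = x*y$), and the only place requiring real care, which I expect to be the main (if mild) obstacle, is recognizing that the clean proto-unital identity $a*b*a = b*a$ \emph{fails} for bands, so the idempotent case must exploit idempotence of the products $a*b$ and $b*a$ rather than of $a$ and $b$ alone.
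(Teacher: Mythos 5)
Your proposal is correct and follows essentially the same route as the paper: for proto unital shelves it uses axiom (2) (with associativity) to get the universal identities $b*a*b=a*b$ and $a*b*a=b*a$, reducing the claim to a tautology, and for idempotent semi-groups it combines associativity with idempotence of the products $a*b$ and $b*a$ to derive $a*b=a*b*a$ and $b*a=b*a*b$ and chain the equalities — the paper does the same, merely arranging the multiplications slightly differently (it multiplies the hypothesis $a*b*a=b*a*b$ by $b$ on the right and on the left). No gaps; your explicit justification of the proto-unital equivalence and the left-zero-band counterexample are welcome details the paper leaves implicit.
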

\begin{proof}
	\
	\begin{enumerate}
		\item{For a proto unital shelf, $ a*b*a = b*a*b \Longleftrightarrow b*a = a*b.$\\
			In other words, $a*b \neq b*a \Longleftrightarrow a*b*a \neq b*a*b.$}
		\item{For an idempotent semi-group $a*b*a = b*a*b \implies a*b*a*b = b*a*b*b \implies a*b = b*a*b.$\\
			Similarly, $a*b*a = b*a*b \implies b*a*b*a = b*b*a*b \implies b*a = b*a*b.$\\
			Together, they imply $a*b = b*a.$\\
			Conversely, $a*b =b*a \implies b*a*b = b*b*a = b*a,$ and $a*b = b*a \implies a*b*a = b*a*a = b*a.$\\
			Together, they imply $a*b*a = b*a*b.$\\
			Therefore, $a*b \neq b*a \Longleftrightarrow a*b*a \neq b*a*b.$}
	\end{enumerate}
\end{proof}

\begin{figure}[ht]
	\includegraphics[scale=0.5]{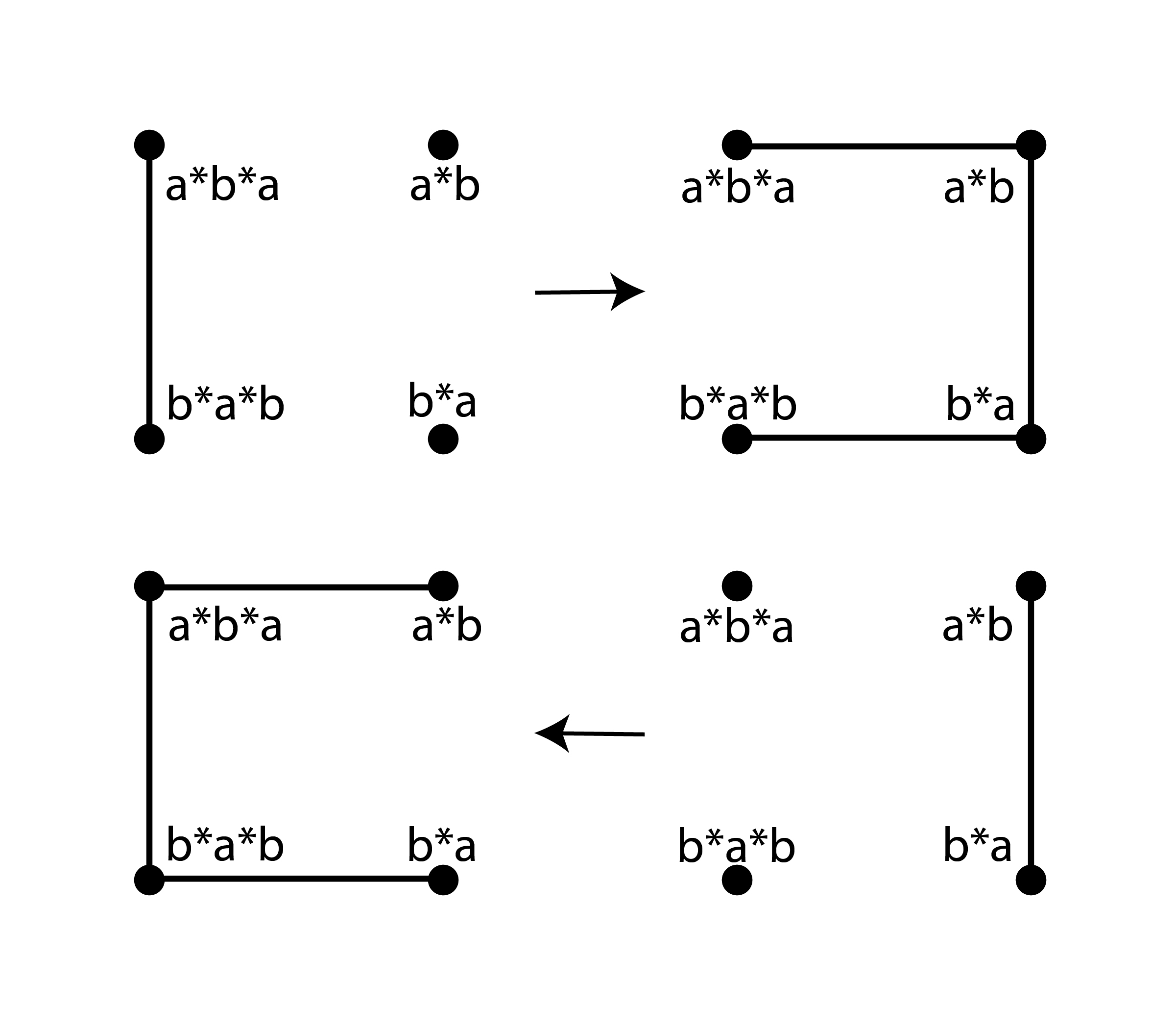}
	\caption{The geometric interpretation of Proposition \ref{prop 2.2}.}
	\label{equivalence_braid_relation_commutativity_geometrically}
\end{figure}

\begin{proposition}\label{prop 2.2} \
	\begin{enumerate}
		\item { Let $(X,*)$ be a semi-group satisfying $a*b*b*c = a*b*c$ for $a,b,c \in X$. Then, $$H_0(X) = \frac{\mathbb{Z}X}{\sim},$$ where $\sim$ is generated in the following way. For $a,b \in X$, $a \sim b$ if there exist $x,y \in X$ such that $a=x*y*x$ and $b=y*x*y.$}
		\item {Let $(X,*)$ be a proto unital shelf or an idempotent semi-group. Then, $$H_0(X) = \frac{\mathbb{Z}X}{\approx},$$ where $\approx$ is generated in the following way. For $a,b \in X$, $a \approx b$ if there exist $x,y \in X$ such that $a=x*y$ and $b=y*x.$ In particular, if $(X,*)$ is commutative, $H_0(X) = \mathbb{Z}X.$}
	\end{enumerate}
\end{proposition}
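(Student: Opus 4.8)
The plan is to identify $H_0(X)$ with the cokernel of $\partial_1$ and then compare two descriptions of the subgroup $\mathrm{im}(\partial_1)\subseteq \mathbb{Z}X$. First I would recall that $H_0(X)=C_0/\mathrm{im}(\partial_1)=\mathbb{Z}X/\mathrm{im}(\partial_1)$, and that by Example \ref{examples of computation of face maps}(1),
\[
\partial_1(x,y)=x*y*x-y*x*y,\qquad x,y\in X.
\]
Hence $\mathrm{im}(\partial_1)$ is the subgroup $A:=\langle\, x*y*x-y*x*y : x,y\in X\,\rangle$. The engine behind both parts is the elementary fact that for a free abelian group $\mathbb{Z}X$ and any relation $R\subseteq X\times X$, the quotient $\mathbb{Z}X/\langle\, a-b:(a,b)\in R\,\rangle$ is isomorphic to $\mathbb{Z}(X/{\equiv})$, where $\equiv$ is the equivalence relation on $X$ generated by $R$; in particular two subgroups of this form agree exactly when the generated equivalence relations agree. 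Since the generators of $A$ are precisely the differences $a-b$ for the one-step relation $a=x*y*x$, $b=y*x*y$, part (1) is immediate: $H_0(X)=\mathbb{Z}X/{\sim}$.

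For part (2) the task reduces to showing that $A$ equals $B:=\langle\, x*y-y*x : x,y\in X\,\rangle$, since then $H_0(X)=\mathbb{Z}X/B=\mathbb{Z}X/{\approx}$, and the commutative case forces $B=0$, giving $H_0(X)=\mathbb{Z}X$. I would treat the two hypotheses separately. For a proto unital shelf, axiom (2) of Proposition \ref{proposition 1 Sam C.} together with associativity (Proposition \ref{proposition 2 Sam C.}) yields $x*y=y*(x*y)=y*x*y$ and, swapping the roles of $x$ and $y$, $y*x=x*y*x$. Substituting these into $\partial_1$ collapses it to $\partial_1(x,y)=y*x-x*y$, so $\mathrm{im}(\partial_1)=B$ on the nose and nothing further is needed.

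For an idempotent semi-group I expect the real work, since here $x*y*x$ does not reduce to $x*y$ or to $y*x$. I would prove $A=B$ by two telescoping inclusions. For $A\subseteq B$, write $x*y*x-y*x*y$ as the sum of $x*y*x-x*y$, $x*y-y*x$, and $y*x-y*x*y$; the outer two terms each have the form $u*v-v*u$ once one observes $x*(x*y)=(x*x)*y=x*y$ and $y*(y*x)=y*x$ (idempotency of elements), hence lie in $B$. For $B\subseteq A$ I would run the telescoping in reverse, using the substitutions $a=x,\,b=x*y$ and $a=y,\,b=y*x$: idempotency of $x$ and of $x*y$ gives $a*b*a=x*y*x$ and $b*a*b=x*y*x*y=x*y$, so $x*y*x-x*y\in A$, and symmetrically $y*x*y-y*x\in A$; combined with the generator $x*y*x-y*x*y$ this places $x*y-y*x$ in $A$.

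The main obstacle is the idempotent case: one must choose the auxiliary substitutions $b=x*y$ (respectively $b=y*x$) and verify that the nested products collapse correctly, the crucial simplifications being $x*x=x$ and the consequent idempotency $(x*y)*(x*y)=x*y$. Once these reductions are in hand both inclusions are short, and the free-abelian-group fact quoted above converts the equality $A=B$ of subgroups into the claimed identification of $H_0(X)$ with $\mathbb{Z}X/{\approx}$, with the commutative subcase giving $\mathbb{Z}X$.
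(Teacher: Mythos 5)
Your proposal is correct and follows essentially the same route as the paper: part (1) read off from $\partial_1(x,y)=x*y*x-y*x*y$, the proto unital case by reducing $x*y*x$ and $y*x*y$ via Proposition \ref{proposition 1 Sam C.}, and the idempotent case by showing the two families of relations generate the same thing using exactly the auxiliary pairs $(x,x*y)$ and $(y,y*x)$ that the paper uses. The only cosmetic difference is that you phrase the telescoping as an equality of subgroups of $\mathbb{Z}X$ while the paper phrases it as an equality of generated equivalence relations.
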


\begin{proof} \
	\begin{enumerate}
		\item {It follows immediately from the definition of the first boundary map, namely, $\partial_1(a,b) = a*b*a - b*a*b$, for $a,b \in X.$}
		\item{When $(X,*)$ is a proto unital shelf it follows because $x*y = y*x*y$ and $x*y*x = y*x$ for $x,y \in X$. When $(X,*)$ is an idempotent semi-group, it has to be shown that relations $\sim$ and $\approx$ are equal.
			Let $x*y \approx y*x$. Then, $$(y*x)*y \approx (x*y)*y = x*y \approx y*x = (y*x)*x \approx (x*y)*x.$$
			On the other hand, let $x*y*x \sim y*x*y$. Then, 
			\begin{equation*}
				\begin{split}
				& y*x = y*x*y*x = y*x*y*y*x \sim y*(y*x)*y = y*x*y \\ & \sim x*y*x = x*(x*y)*x \sim x*y*x*x*y = x*y*x*y = x*y.
				\end{split}
			\end{equation*}
}
	\end{enumerate}
\end{proof}

\begin{table}[h]
	\caption{An idempotent semi-group (on the left) and a semi-group neither satisfying the axioms of a proto unital shelf nor the idempotence axiom (on the right).}
	\label{IS and non IS}
	\begin{minipage}{0.5\textwidth}
		\centering
		\begin{tabular}{c | c c c c}
			$*$&0&1&2&3 \\
			\hline
			0&0&0&0&0\\
			1&0&1&1&3\\
			2&0&1&2&3\\
			3&0&1&1&3\\	
		\end{tabular}
	\end{minipage}
	\hfillx
	\begin{minipage}{0.5\textwidth}
		\centering
		\begin{tabular}{c|c c c c}
			$*$&0&1&2&3\\
			\hline
			0&0&0&2&2\\
			1&0&0&2&2\\
			2&0&0&2&2\\
			3&0&1&2&3\\
		\end{tabular}
	\end{minipage}
\end{table}

The zero$^{th}$ lbo homology group of semi-groups satisfying $a*b*c=a*b*b*c$ can't be computed in general just by studying commutativity in the multiplication table as the equivalence of the braid group relation and the commutativity axiom does not hold in general. For example, consider the semi-group $X$ shown on the right in Table \ref{IS and non IS}. $H_0(X) = \mathbb{Z}^3$, which would not be the result if the above proposition is used to compute $H_0(X)$. The example below illustrates how the above proposition is used to compute the $0^{th}$ lbo homology group of idempotent semi-groups and proto unital shelves.

\begin{example}
	Consider the idempotent semi-group of four elements shown in Table \ref{IS and non IS}. There are two places in the multiplication table where the commutativity axiom is not satisfied. $$1*3 \neq 3*1,2*3 \neq 3*2.$$
	Therefore, $X = \{0\} \cup \{1,3\} \cup \{2\}$ is the partition from Proposition \ref{prop 2.2}. The number of equivalence classes is $3$. Hence, $H_0(X) = \mathbb{Z}^3.$
\end{example}

Let $(X,*)$ be a finite proto unital shelf or a finite idempotent semi-group. Then $H_0(X)$ is the zero homology group of the $1$-skeleton in the geometric realization. The zero homology group counts the number of components of the geometric structure. In the $1$-skeleton, vertices are joined by edges (bringing down the number of components) when $a*b*a \neq b*a*b$. In the case of proto unital shelves or idempotent semi-groups this equivalently happens when $a*b \neq b*a$. In particular, the edge $(a,b)$ and $(b,a)$ joins the components corresponding to $a*b$ and $b*a$. Translating the language of equivalent classes to components, this basically means that two vertices $x$ and $y$ are in the same component of the $1$-skeleton if for two vertices $a,b$, the image of the edge $(a,b)$ is not zero, $x = a*b$ and $y = b*a$. In particular, when $(X,*)$ is commutative, the number of components is equal to the number of elements in $X$ and therefore the zero homology group of the $1$-skeleton is $Z^{|X|}$.

\section{Temperley-Lieb algebra, Jones' monoids and lbo homology}

The notion of a Temperley-Lieb algebra was introduced in 1971 by Harold N. V. Temperley and Elliott H. Lieb \cite{TL}. A visually appealing diagrammatic definition of the algebra was given by Louis H. Kauffman \cite{Kau}. Briefly, the idea is as follows \cite{Abr,PT}. 

\begin{figure}[ht]
	\includegraphics[scale=0.2]{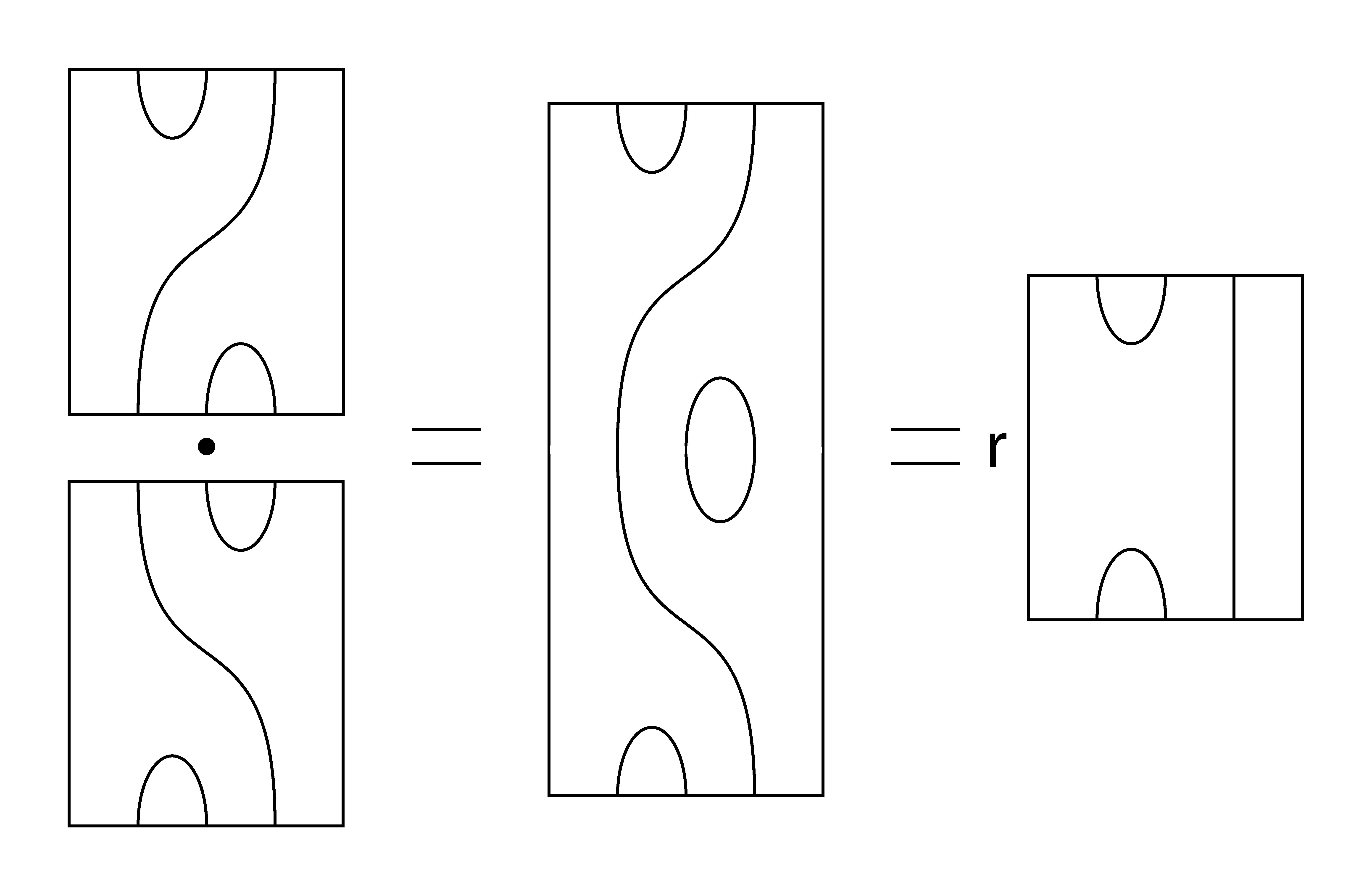}
	\caption{Demonstration of multiplication in Temperley-Lieb algebra.}
	\label{demonstration of multiplication in Temperley-Lieb algebra}
\end{figure}

Consider rectangles with $n$ marked points on the upper side and another $n$ marked points on the lower side. The marked points are then paired with strings so that the strings do not intersect. These diagrams are identified up to planar isotopy and are called {\bf Kauffman diagrams}. The number of Kauffman diagrams for $2n$ marked points is equal to the $n^{th}$ Catalan number. Multiplication between two such diagrams $A$ and $B$ is done by identifying the lower side of the rectangle corresponding to $A$ with the upper side of the rectangle corresponding to $B$. In other words, the diagrams are concatenated or stacked. Closed loops are eliminated by multiplying by $r$ where $R$ is a commutative ring and $r \in R$, a fixed element. An example of the operation and elimination of the closed loop is shown in Figure \ref{demonstration of multiplication in Temperley-Lieb algebra}. For $2n$ points, the $n$-strand {\bf Temperley-Lieb algebra} denoted by $TL_n(r)$ is defined as an $R$-linear algebra spanned by the Kauffman diagrams. 

Equivalently, Temperley-Lieb algebras can be defined using generators and relations. Moreover, the set of Kauffman diagrams for $n$ pairs of points can be given a monoid structure by treating $r$ as a generator along with the standard generating set. Considering two Kauffman diagrams equivalent up to elimination of trivial components the algebraic structure obtained is called the {\bf Jones' monoid} \cite{DE,DEEFHHL}. The Jones' monoid obtained from $TL_n(r)$ is denoted by $J_n$. 

The primary reason for looking at Temperley-Lieb algebras in connection to lbo homology is the Jones' monoids. It is not difficult to observe that many of the elements in these monoids are idempotent with the concatenation operation. Further, the operation is also associative. The question one should ask at this time is whether or not these idempotent elements can be identified and whether or not they form a closed sub-structure inside the Jones' monoids. Well, it turns out to be not very obvious.

All the five elements in $J_3$ are idempotent elements. However, two of the fourteen elements in $J_4$ are not idempotent. Moreover, the twelve idempotent elements are not closed under the concatenation operation. Figure \ref{non-idempotent elements in $J_4$} shows the non idempotent elements in $J_4$. Notice that they are mirror images of each other.

\begin{figure}[ht]
\includegraphics[scale=0.3]{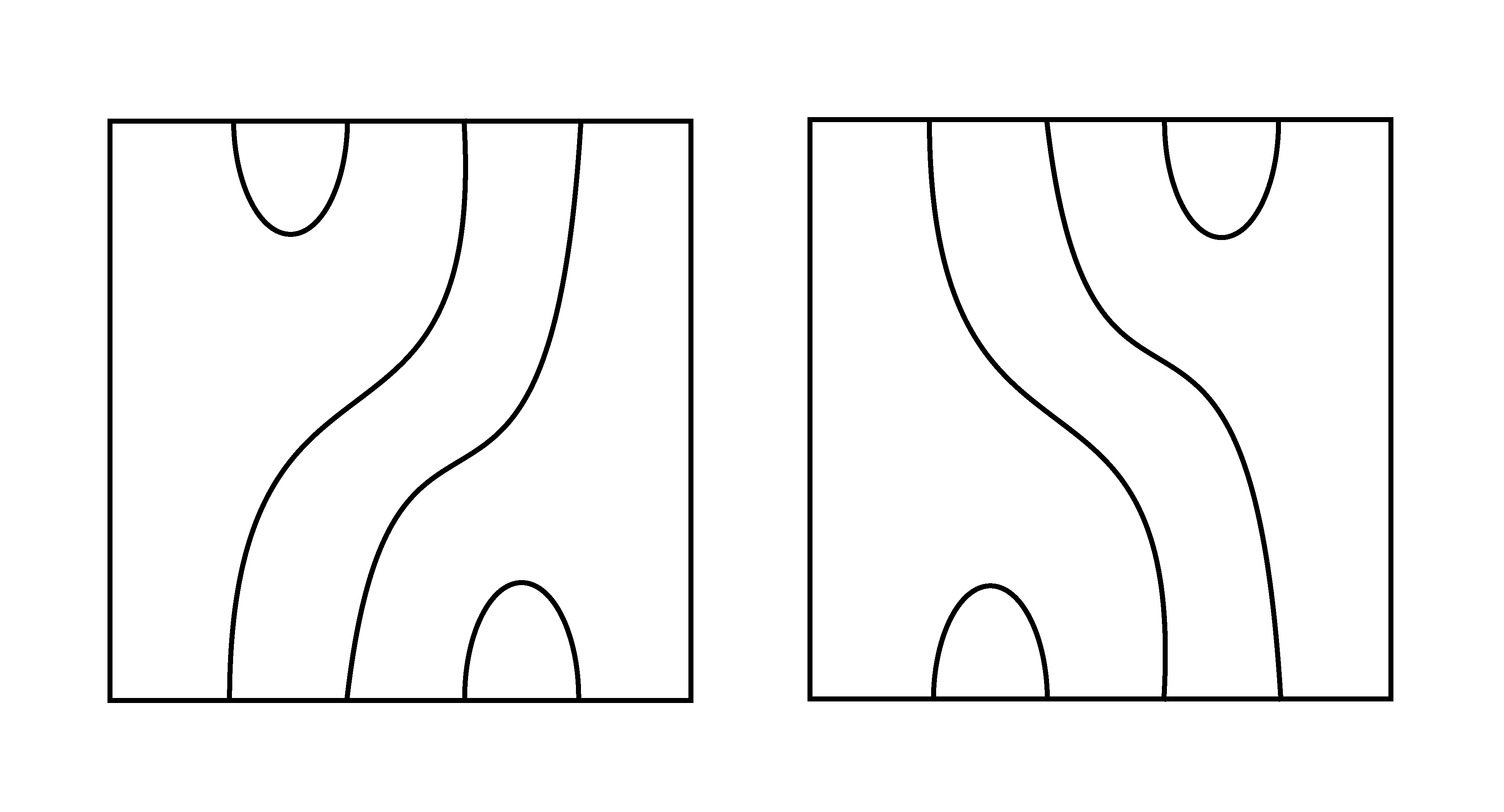}
\caption{Non-idempotent elements in $J_4$.}
\label{non-idempotent elements in $J_4$}
\end{figure}

I. Dolinka et al. characterized the idempotent elements in the Jones' monoid \cite{DEEFHHL}. Using the notion of an interface graph,  they proved the following proposition.

\begin{proposition}[I. Dolinka et al.]
		An element in the Jones' monoid is idempotent iff its interface graph has no odd path.
\end{proposition}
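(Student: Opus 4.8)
\section*{Proof proposal}

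The plan is to argue entirely in the diagrammatic model of $J_n$, where an element $\alpha$ is a non-crossing perfect matching (Kauffman diagram) of the $n$ top points $T$ with the $n$ bottom points $B$, and the product $\alpha\cdot\alpha$ is formed by stacking the two copies and \emph{deleting} the closed loops that appear in the middle. Because these loops are discarded rather than weighted, $\alpha$ is idempotent exactly when $\alpha^2=\alpha$ as Kauffman diagrams; this matches the stated counts ($5$ idempotents in $J_3$, $12$ in $J_4$). The whole proof is thus a careful analysis of $\alpha^2$ read off from the interface graph. I would take $\Gamma(\alpha)$ to have as vertices the $n$ middle points (the bottom row of the upper copy, which is identified with the top row of the lower copy), with a \emph{lower} edge for each cup (bottom arc) of $\alpha$ and an \emph{upper} edge for each cap (top arc) of $\alpha$. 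Each middle vertex lies on at most one edge of each kind, so $\Gamma(\alpha)$ is a disjoint union of even cycles and of paths whose edges alternate between the two kinds. A vertex has no lower edge iff it is an ``up-exit'' (the foot of a through-string of the upper copy, reaching $T$), and no upper edge iff it is a ``down-exit'' (reaching $B$); hence the path endpoints are exactly the up- and down-exits, of which there are $p(\alpha)$ of each, where $p(\alpha)$ is the number of through-strings of $\alpha$.

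Next I would build the dictionary between the components of $\Gamma(\alpha)$ and the arcs of $\alpha^2$. An even path (including an isolated vertex) joins one up-exit to one down-exit and so traces a through-string of $\alpha^2$; a cycle produces a closed loop, which is deleted; an odd path has two endpoints of the same type and therefore creates either a new cap (two up-exits) or a new cup (two down-exits). Counting up-exits and down-exits in two ways gives $p(\alpha)=(\#\text{even paths})+2(\#\text{odd-cap paths})=(\#\text{even paths})+2(\#\text{odd-cup paths})$, so these two kinds of odd path occur the same number of times, say $q$, and $p(\alpha^2)=p(\alpha)-2q$. In particular $\Gamma(\alpha)$ has no odd path if and only if $p(\alpha^2)=p(\alpha)$.

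The two implications then follow. If $\Gamma(\alpha)$ has an odd path, then $q>0$, so $p(\alpha^2)<p(\alpha)$; since the number of through-strings is an invariant of a diagram, $\alpha^2\neq\alpha$ and $\alpha$ is not idempotent. Conversely, if there is no odd path, then the only caps of $\alpha^2$ are those carried up unchanged from the upper copy, i.e.\ exactly the caps of $\alpha$, and symmetrically the cups of $\alpha^2$ are exactly the cups of $\alpha$. Here I would invoke the rigidity of Temperley--Lieb diagrams: once the cap-pairing on $T$ and the cup-pairing on $B$ are fixed, the remaining exposed top and bottom points are forced to be joined by through-strings in the unique planar, order-preserving way. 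Since $\alpha^2$ and $\alpha$ have identical caps and cups, their through-strings agree as well, so $\alpha^2=\alpha$ and $\alpha$ is idempotent.

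I expect the main obstacle to be the bookkeeping in the second step: verifying precisely which arc of $\alpha^2$ each component of $\Gamma(\alpha)$ contributes, and in particular confirming that odd paths are the \emph{only} mechanism that manufactures extra caps or cups (and hence lowers $p$), while no cap or cup of $\alpha$ is ever destroyed by the product. Once this dictionary is nailed down, the planar-uniqueness principle closes the ``no odd path'' direction and the propagating-number count closes the other, so the equivalence drops out.
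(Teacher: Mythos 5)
The paper does not actually prove this proposition: it is imported verbatim from Dolinka et al.\ \cite{DEEFHHL}, so there is no internal argument to compare yours against. Your proposal is nonetheless a correct and essentially complete proof, and it follows the same interface-graph analysis as the cited source: the interface decomposes into even cycles and alternating paths; even paths (one up-exit, one down-exit) become through-strings of $\alpha^2$, cycles become deleted loops, and odd paths (two exits of the same type) manufacture new caps or cups. Your double count $p(\alpha)=E+2Q_u=E+2Q_d$ correctly forces $Q_u=Q_d=q$ and $p(\alpha^2)=p(\alpha)-2q<p(\alpha)$ whenever an odd path exists, which settles one direction since the propagating number is a diagram invariant. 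The two points you flag as delicate are indeed the only ones requiring care, and both hold: a cap of the upper copy joins two points of $T$ and never meets the interface, so no cap or cup of $\alpha$ is ever destroyed and odd paths are the only source of new ones; and a Temperley--Lieb diagram is determined by its cap- and cup-pairings because the through-strings, being non-crossing, must join the exposed top and bottom points in order. The one convention to fix explicitly is that your argument identifies the obstruction as a path with an odd number of \emph{edges} (equivalently an even number $\geq 2$ of vertices), which is the reading of ``odd path'' under which the stated equivalence is true.
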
   

The following table displays the number of idempotents in the Jones' monoids $J_n$ with growing $n$. The remark after the table provides the pathway for constructing families of idempotent sub-monoids of $J_n$.

\begin{table}[h]\label{number of idempotents}
	\caption{Idempotents in $J_n$.}
	\centering
	\begin{tabular}{| c | c | c | c | c | c | c | c | c |}
		\hline
		1&2&3&4&5&6&7&8&9 \\
		\hline
		1&2&5&12&36&96&311&886&3000 \\	
		\hline
	\end{tabular}
\end{table}

\begin{remark}
	The number of partitions of the integer $n$ whose largest part is $k$ is equal to the number of partitions of $n$ with $k$ parts. If this value for given $n$ is denoted by $n_k$ then $n_k = (n-k)_k + (n-1)_{k-1}.$ The value of $k$ which is of interest in the current context is $3$. Then the recurrence relation becomes $n_3 =  (n-3)_3 + (n-1)_2.$ The first few terms in this sequence starting for $n=1$ are $1,2,3,4,5,7,8,10,12,14...$.
\end{remark}

\begin{definition}
	Let $j \in J_n$. $P = a_1 + a_2 + ... + a_k$ for some positive $k$ is said to a {\bf partition} of $j$ if $j$ can be divided with $(k-1)$ vertical line segments with the divided parts having $a_1,a_2,...,a_k$ strings. 
\end{definition}

Figure \ref{partitioning elements in $J_n$} demonstrates the idea of partitioning elements in Jones' monoids. In the figure, $ n = 10 $ and $ P = 2 + 1 + 2 + 5.$ Notice that the identity element in $J_n$ admits every partition of $P$.

\begin{figure}[ht]
	\includegraphics[scale=0.4]{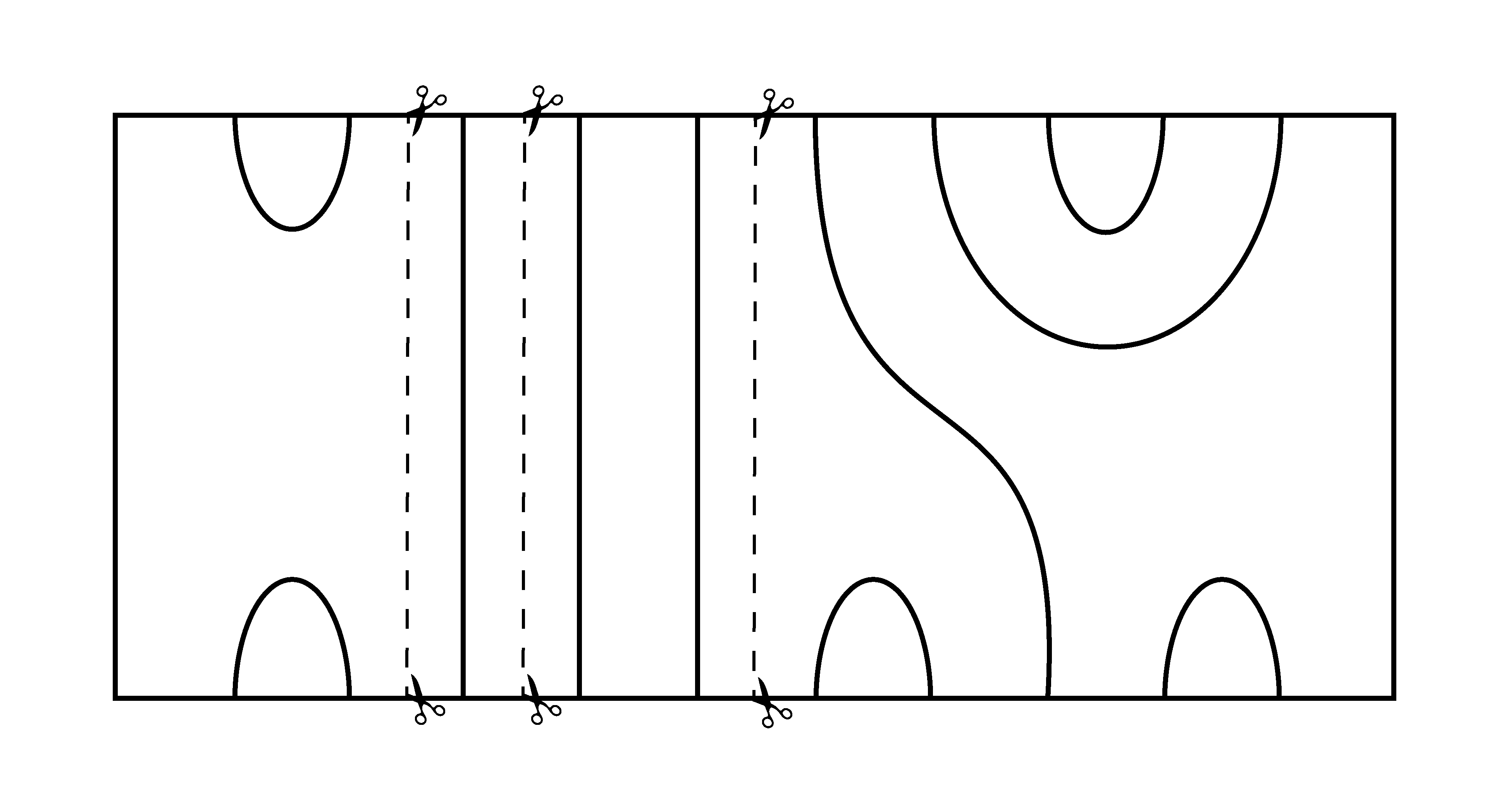}
	\caption{Partitioning elements in Jones' monoids.}
	\label{partitioning elements in $J_n$}
\end{figure}

\begin{proposition}
	Let $P$ be a partition of the natural number $n$ with each part having value at most $3$. Denote by $JSMP_n$\footnote{Jones' sub-monoid corresponding to partition $P$ in $J_n$ is abbreviated as $JSMP_n$.} the set of elements in $J_n$ for which $P$ is a partition. Then $JSMP_n$ is an idempotent monoid.
\end{proposition}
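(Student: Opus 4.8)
The plan is to exploit the geometric meaning of a partition $P = a_1 + a_2 + \cdots + a_k$ of an element $j \in J_n$: the $(k-1)$ vertical line segments that realize $P$ cut the underlying rectangle into $k$ side-by-side blocks, none of whose strings cross a dividing line, so that $j$ is a block-diagonal diagram $j = j_1 \mid j_2 \mid \cdots \mid j_k$, where each $j_i$ is a Kauffman diagram on $a_i$ top and $a_i$ bottom points, i.e. an element of $J_{a_i}$. Thus $JSMP_n$ is precisely the image of the embedding of $J_{a_1} \times J_{a_2} \times \cdots \times J_{a_k}$ into $J_n$ obtained by placing diagrams side by side. Two things must then be checked: that $JSMP_n$ is a submonoid of $J_n$, and that each of its elements is idempotent.

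First I would establish closure and the monoid structure, which I expect to be the step requiring the most care. Suppose $A, B \in JSMP_n$, so that $A = A_1 \mid \cdots \mid A_k$ and $B = B_1 \mid \cdots \mid B_k$ with $A_i, B_i \in J_{a_i}$ and the dividing lines of $A$ and of $B$ occurring at the same horizontal positions. When forming the concatenation $AB$, the bottom points of block $i$ of $A$ are glued to the top points of block $i$ of $B$; since no string of $A$ or of $B$ crosses a divider, no string of $AB$ can cross a divider either, and in particular every closed loop produced during concatenation is confined to a single block. Hence the same vertical lines still realize $P$ for $AB$, so $AB \in JSMP_n$, and moreover the concatenation restricts blockwise to $AB = (A_1 B_1) \mid \cdots \mid (A_k B_k)$, where $A_i B_i$ is exactly the product computed in $J_{a_i}$. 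The identity of $J_n$ (the diagram of $n$ straight vertical strings) admits $P$ as a partition, hence lies in $JSMP_n$. Therefore $JSMP_n$ is a submonoid, identified via the block decomposition with the direct product $J_{a_1} \times \cdots \times J_{a_k}$.

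Given this blockwise description of multiplication, idempotency is immediate. For $j = j_1 \mid \cdots \mid j_k \in JSMP_n$ one has $j^2 = j_1^2 \mid \cdots \mid j_k^2$, so it suffices that each $j_i$ be idempotent in $J_{a_i}$. By hypothesis every part satisfies $a_i \leq 3$, and every element of $J_1$, $J_2$, and $J_3$ is idempotent: $J_1$ contains only the identity; the two elements of $J_2$ are the identity and the cup-cap, whose square returns the cup-cap once the resulting closed loop is deleted in the Jones' monoid; and all five elements of $J_3$ are idempotent, as already noted. Hence $j_i^2 = j_i$ for every $i$ and therefore $j^2 = j$, so $JSMP_n$ is an idempotent monoid.

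Finally, I would remark that idempotency could instead be deduced from the characterization of I.~Dolinka et al.: a block-diagonal diagram has an interface graph equal to the disjoint union of the interface graphs of its blocks, and since each block lies in $J_{a_i}$ with $a_i \leq 3$, none of these pieces can contain an odd path. The direct blockwise argument above is, however, more transparent and yields closure at the same time, which is why I would carry it out that way.
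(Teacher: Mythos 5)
Your proposal is correct and follows essentially the same route as the paper: the parts of a $P$-partitioned diagram do not interact under concatenation, and each part lies in $J_1$, $J_2$, or $J_3$, all of whose elements are idempotent. You simply spell out in more detail the blockwise closure and the identification with $J_{a_1}\times\cdots\times J_{a_k}$, which the paper compresses into the remark that ``none of the parts interact with each other.''
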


\begin{proof}
	As each part of the partition $P$ has value at most $3$, individually each of these are elements of $J_1$, $J_2$, or $J_3$ and therefore idempotent. Globally, that is for the entire element idempotence follows as none of the parts interact with each other.
\end{proof}

Note that two Jones' sub-monoids in $J_n$ are isomorphic if and only if they correspond to the same partition. Moreover, the trivial Jones' sub-monoid is present in every $J_n$ as every $J_n$ admits the partition $1+1+...+1$($n$-times). As is obvious by now, lbo homology can be computed for Jones' sub-monoids. Since, Jones' sub-monoids are idempotent semi-groups, they are not separately enumerated in this note. The reason behind eliminating trivial components in Kauffman diagrams earlier is that all the elements when composed with themselves do not generate the same number of trivial components and by definition the face maps of lbo homology do not allow this.

\section{Odds and ends}

There is much more to discover about lbo homology. Some ideas and experimental data are summarized in this section.

\subsection{Preliminary computational data}

This part of the note is dedicated towards preliminary computational data of lbo homology. To avoid lengthening the note unnecessarily, in this section the algebraic structures will be presented in programming notation. For example, the associative shelf in Table \ref{IS and AS} is presented by $\{\{0,0,0,0\},\{0,0,1,1\},\{0,0,2,2\},\{0,0,2,3\}\}$. The rows with a \Coffeecup (coffee cup) symbol on the left in Table \ref{computational data for associative shelves} are also idempotent semi-groups and the rows with a \Coffeecup (coffee cup) symbol on the left in Table \ref{computational data for idempotent semi-groups} are also associative shelves. Table \ref{computational data for abcabbc semi-groups} shows computations for semi-groups satisfying the axiom $a*b*b*c = a*b*c$ but not idempotence or right self-distributivity.

\begin{table}[h]
	\caption{Lbo homology of associative shelves up to order $3$.}
	\label{computational data for associative shelves}
	\centering
	\begin{tabular}{| c | c | c | c | c | c |}
		\hline
		& Associative shelf: $X$ & $H_0(X)$ & $H_1(X)$ & $H_2(X) $ & $ H_3(X)$ \\
		\hline
		& $\{ \{ 0,0 \},\{ 0,0 \} \}$ & $\mathbb{Z}^2$ & $\mathbb{Z}^3$ & $\mathbb{Z}^4$ & $\mathbb{Z}^7$ \\	
		\hline
		\Coffeecup & $\{ \{ 0,0 \},\{ 0,1 \} \}$ & $\mathbb{Z}^2$ & $\mathbb{Z}$ & 0 & $\mathbb{Z}$ \\
		\hline
		\Coffeecup & $\{ \{ 0,0 \},\{ 1,1 \} \}$ & $\mathbb{Z}$ & 0 & 0 & 0 \\	
		\hline
		\Coffeecup & $\{ \{ 0,1 \},\{ 0,1 \} \}$ & $\mathbb{Z}$ & 0 & 0 & 0 \\
		\hline
		& $\{ \{ 0,0,0 \},\{ 0,0,0 \},\{ 0,0,0 \} \}$ & $\mathbb{Z}^3$ & $\mathbb{Z}^8$ & $\mathbb{Z}^{20}$ & $\mathbb{Z}^{57}$ \\
		\hline
		& $\{ \{ 0,0,0 \},\{ 0,0,0 \},\{ 0,0,1 \} \}$ & $\mathbb{Z}^3$ & $\mathbb{Z}^6$ & $\mathbb{Z}^9$ & $\mathbb{Z}^{19}$ \\
		\hline
		& $\{ \{ 0,0,0 \},\{ 0,0,0 \},\{ 0,0,2 \} \}$ & $\mathbb{Z}^3$ & $\mathbb{Z}^6$ & $\mathbb{Z}^7$ & $\mathbb{Z}^{21}$ \\
		\hline
		& $\{ \{ 0,0,0 \},\{ 0,0,0 \},\{ 2,2,2 \} \}$ & $\mathbb{Z}^2$ & $\mathbb{Z}^5$ & $\mathbb{Z}^7$ & $\mathbb{Z}^{17}$ \\
		\hline
		& $\{ \{ 0,0,0 \},\{ 0,0,1 \},\{ 0,0,2 \} \}$ & $\mathbb{Z}^3$ & $\mathbb{Z}^3$ & $\mathbb{Z}$ & $\mathbb{Z}^{5}$ \\
		\hline
		\Coffeecup & $\{ \{ 0,0,0 \},\{ 0,1,0 \},\{ 0,0,2 \} \}$ & $\mathbb{Z}^3$ & $\mathbb{Z}^4$ & 0 & $\mathbb{Z}^6$ \\
		\hline
		\Coffeecup & $\{ \{ 0,0,0 \},\{ 0,1,0 \},\{ 2,2,2 \} \}$ & $\mathbb{Z}^2$ & $\mathbb{Z}$ & 0 & $\mathbb{Z}$ \\
		\hline
		& $\{ \{ 0,0,0 \},\{ 0,1,1 \},\{ 0,1,1 \} \}$ & $\mathbb{Z}^3$ & $\mathbb{Z}^6$ & $\mathbb{Z}^7$ & $\mathbb{Z}^{21}$ \\
		\hline
		\Coffeecup & $\{ \{ 0,0,0 \},\{ 0,1,1 \},\{ 0,1,2 \} \}$ & $\mathbb{Z}^3$ & $\mathbb{Z}^3$ & 0 & $\mathbb{Z}^7$ \\
		\hline
		\Coffeecup & $\{ \{ 0,0,0 \},\{ 0,1,1 \},\{ 0,2,2 \} \}$ & $\mathbb{Z}^2$ & $\mathbb{Z}^2$ & 0 & $\mathbb{Z}^4$ \\
		\hline
		\Coffeecup & $\{ \{ 0,0,0 \},\{ 0,1,2 \},\{ 0,1,2 \} \}$ & $\mathbb{Z}^2$ & $\mathbb{Z}^2$ & 0 & $\mathbb{Z}^4$ \\
		\hline
		\Coffeecup & $\{ \{ 0,0,0 \},\{ 1,1,1 \},\{ 2,2,2 \} \}$ & $\mathbb{Z}$ & 0 & 0 & 0 \\
		\hline
		& $\{ \{ 0,0,2 \},\{ 0,0,2 \},\{ 0,0,2 \} \}$ & $\mathbb{Z}^2$ & $\mathbb{Z}^5$ & $\mathbb{Z}^7$ & $\mathbb{Z}^{17}$ \\
		\hline
		\Coffeecup & $\{ \{ 0,0,2 \},\{ 0,1,2 \},\{ 0,0,2 \} \}$ & $\mathbb{Z}^2$ & $\mathbb{Z}$ & 0 & $\mathbb{Z}$ \\
		\hline
		\Coffeecup & $\{ \{ 0,0,2 \},\{ 0,1,2 \},\{ 0,2,2 \} \}$ & $\mathbb{Z}^2$ & $\mathbb{Z}^2$ & 0 & $\mathbb{Z}^4$ \\
		\hline
		\Coffeecup & $\{ \{ 0,1,2 \},\{ 0,1,2 \},\{ 0,1,2 \} \}$ & $\mathbb{Z}$ & 0 & 0 & 0 \\
		\hline
	\end{tabular}
\end{table}

\begin{table}[h]
	\caption{Lbo homology of some idempotent semi-groups up to order $4$.}
	\label{computational data for idempotent semi-groups}
	\centering
	{\small
	\begin{tabular}{ | c | c | c | c | c | c |}
		\hline
		& Idempotent semi-group: $X$ & $H_0(X)$ & $H_1(X)$ & $H_2(X) $ & $ H_3(X)$ \\
		\hline
		& $\{ \{ 0,0,0 \},\{ 0,1,2 \},\{ 2,2,2 \} \}$ & $\mathbb{Z}^2$ & $\mathbb{Z}^2$ & 0 & $\mathbb{Z}^4$ \\
		\hline
		\Coffeecup & $\{ \{ 0,0,0,0 \},\{ 0,1,0,0 \},\{ 0,0,2,0 \},\{0,0,0,3\} \}$ & $\mathbb{Z}^4$ & $\mathbb{Z}^9$ & $\mathbb{Z}^6$ & $\mathbb{Z}^{27}$ \\
		\hline
		\Coffeecup & $\{ \{ 0,0,0,0 \},\{ 0,1,0,0 \},\{ 0,0,2,0 \},\{3,3,3,3\} \}$ & $\mathbb{Z}^3$ & $\mathbb{Z}^4$ & 0 & $\mathbb{Z}^6$ \\
		\hline
		\Coffeecup & $\{ \{ 0,0,0,0 \},\{ 1,1,1,1 \},\{ 2,2,2,2 \},\{3,3,3,3\} \}$ & $\mathbb{Z}$ & 0 & 0 & 0 \\
		\hline
		& $\{ \{ 0,0,0,0 \},\{ 0,1,1,1 \},\{ 0,1,2,3 \},\{0,3,3,3\} \}$ & $\mathbb{Z}^3$ & $\mathbb{Z}^5$ & 0 & $\mathbb{Z}^{21}$ \\
		\hline
		& $\{ \{ 0,0,0,0 \},\{ 0,1,2,3 \},\{ 0,2,2,3 \},\{3,3,3,3\} \}$ & $\mathbb{Z}^3$ & $\mathbb{Z}^5$ & 0 & $\mathbb{Z}^{21}$ \\
		\hline
		& $\{ \{ 0,0,0,0 \},\{ 0,1,1,3 \},\{ 0,2,2,3 \},\{3,3,3,3\} \}$ & $\mathbb{Z}^2$ & $\mathbb{Z}^4$ & 0 & $\mathbb{Z}^{16}$ \\
		\hline
		& $\{ \{ 0,0,0,0 \},\{ 0,1,2,3 \},\{ 0,1,2,3 \},\{3,3,3,3\} \}$ & $\mathbb{Z}^2$ & $\mathbb{Z}^4$ & 0 & $\mathbb{Z}^{16}$ \\
		\hline
		& $\{ \{ 0,0,0,0 \},\{ 0,1,2,3 \},\{ 2,2,2,2 \},\{3,3,3,3\} \}$ & $\mathbb{Z}^2$ & $\mathbb{Z}^3$ & 0 & $\mathbb{Z}^9$ \\
		\hline
		\Coffeecup & $\{ \{ 0,0,0,0 \},\{ 0,1,0,1 \},\{ 0,0,2,2 \},\{0,1,2,3\} \}$ & $\mathbb{Z}^4$ & $\mathbb{Z}^5$ & 0 & $\mathbb{Z}^{15}$ \\
		\hline
		\Coffeecup & $\{ \{ 0,1,2,3 \},\{ 0,1,2,3 \},\{ 0,1,2,3 \},\{0,1,2,3\} \}$ & $\mathbb{Z}$ & 0 & 0 & 0 \\
		\hline
	\end{tabular}}
\end{table}

\begin{table}[h]
	\caption{Lbo homology of semi-groups satisfying $a*b*c = a*b*b*c$ up to order $4$.}
	\label{computational data for abcabbc semi-groups}
	\centering
	{\small
		\begin{tabular}{ | c | c | c | c | c |}
			\hline
			Algebraic structure: $X$ & $H_0(X)$ & $H_1(X)$ & $H_2(X) $ & $ H_3(X)$ \\
			\hline
			$\{ \{ 0,0,0 \},\{ 0,0,0 \},\{0,1,2 \} \}$ & $\mathbb{Z}^3$ & $\mathbb{Z}^3$ & $\mathbb{Z}$ & $\mathbb{Z}^5$ \\
			\hline
			$\{ \{ 0,0,0,0 \},\{ 0,0,0,0 \},\{ 0,0,0,0 \},\{0,0,2,3\} \}$ & $\mathbb{Z}^4$ & $\mathbb{Z}^{10}$ & $\mathbb{Z}^{17}$ & $\mathbb{Z}^{58}$ \\
			\hline
			$\{ \{ 0,0,0,0 \},\{ 0,0,0,0 \},\{ 0,0,0,0 \},\{0,1,1,3\} \}$ & $\mathbb{Z}^4$ & $\mathbb{Z}^{10}$ & $\mathbb{Z}^{17}$ & $\mathbb{Z}^{58}$ \\
			\hline
			$\{ \{ 0,0,0,0 \},\{ 0,0,0,0 \},\{ 0,0,0,0 \},\{0,1,2,3\} \}$ & $\mathbb{Z}^4$ & $\mathbb{Z}^{7}$ & $\mathbb{Z}^{8}$ & $\mathbb{Z}^{25}$ \\
			\hline
			$\{ \{ 0,0,0,0 \},\{ 0,0,0,0 \},\{ 0,0,0,2 \},\{0,1,0,3\} \}$ & $\mathbb{Z}^4$ & $\mathbb{Z}^{11}$ & $\mathbb{Z}^{3}$ & $\mathbb{Z}^{11}$ \\
			\hline
			$\{ \{ 0,0,0,0 \},\{ 0,0,0,0 \},\{ 0,0,2,0 \},\{0,1,0,3\} \}$ & $\mathbb{Z}^4$ & $\mathbb{Z}^{8}$ & $\mathbb{Z}^{7}$ & $\mathbb{Z}^{26}$ \\
			\hline
			$\{ \{ 0,0,0,0 \},\{ 0,1,1,1 \},\{ 0,1,1,1 \},\{0,1,2,3\} \}$ &  $\mathbb{Z}^4$ & $\mathbb{Z}^{6}$ & $\mathbb{Z}$ & $\mathbb{Z}^{22}$ \\
			\hline
			$\{ \{ 0,0,0,0 \},\{ 0,1,1,3 \},\{ 0,1,1,3 \},\{3,3,3,3\} \}$ &  $\mathbb{Z}^3$ & $\mathbb{Z}^{9}$ & $\mathbb{Z}^{10}$ & $\mathbb{Z}^{47}$ \\
			\hline
			$\{ \{ 0,0,0,3 \},\{ 0,0,0,3 \},\{ 0,1,2,3 \},\{0,0,0,3\} \}$ &  $\mathbb{Z}^3$ & $\mathbb{Z}^{3}$ & $\mathbb{Z}$ & $\mathbb{Z}^{5}$ \\
			\hline
			$\{ \{ 0,0,0,3 \},\{ 0,0,0,3 \},\{ 0,1,2,3 \},\{0,0,3,3\} \}$ & $\mathbb{Z}^3$ & $\mathbb{Z}^{5}$ & $\mathbb{Z}$ & $\mathbb{Z}^{17}$ \\
			\hline
			$\{ \{ 0,0,2,2 \},\{ 0,0,2,2 \},\{ 0,0,2,2 \},\{0,1,2,3\} \}$ & $\mathbb{Z}^3$ & $\mathbb{Z}^{3}$ & $\mathbb{Z}$ & $\mathbb{Z}^{5}$ \\
			\hline
	\end{tabular}}
\end{table}

\subsection{A comparison between the homology theories}

This subsection is dedicated towards comparing the different homology theories for associative shelves. In particular, group homology, Hochschild homology, lbo homology, one term homology and rack homology are discussed. As the initial approach to lbo homology was from the side of self-distributive algebraic structures, group homology and Hochschild homology are not studied in detail. However, one can explore more in this direction. Unless otherwise stated, the notation for lbo homology remains unchanged. A brief description of the other four homology theories are as follows \cite{Prz1}.

\begin{definition}
	Let $(X, *)$ be a semi-group. For $ n \geq 0$, let $C_n = \mathbb{Z}X^{n+1}$ and $\partial_n : C_n \longrightarrow C_{n-1}$ given by:
	\begin{equation*}
	\begin{split}
	\partial_n(x_0,x_1,..., x_n) & = (x_1,x_2,...,x_n) + \sum_{i=1}^{n-1}(-1)^i(x_0,x_1,...,x_i * x_{i+1},x_{i+2},...,x_n) \\
	&+ (-1)^n(x_0,x_1,...,x_{n-1}).
	\end{split}
	 \end{equation*}
	 Then, $(C_n,\partial_n)$ is a chain complex and the {\bf group homology} groups denoted by $H_*^G(X)$ are defined in the usual way.
\end{definition}

\begin{definition}
	Let $(X, *)$ be a semi-group. For $ n \geq 0$, let $C_n = \mathbb{Z}X^{n+1}$ and $\partial_n : C_n \longrightarrow C_{n-1}$ given by:{\small
	\begin{equation*}
		\partial_n(x_0,x_1,..., x_n)  = \sum_{i=0}^{n-1}(-1)^i(x_0,x_1,...,x_i * x_{i+1},x_{i+2},...,x_n) + (-1)^n(x_n*x_0,x_1,...,x_{n-1}).
	\end{equation*}}
	Then, $(C_n,\partial_n)$ is a chain complex and the {\bf Hochschild homology} groups denoted by $H_*^H(X)$ are defined in the usual way.
\end{definition}

\begin{definition}
	Let $(X, *)$ be a shelf. For $ n \geq 0$, let $C_n = \mathbb{Z}X^{n+1}$ and $\partial_n : C_n \longrightarrow C_{n-1}$ given by: {\small
	\begin{equation*}
		\partial_n(x_0,x_1,..., x_n) = (x_1,x_2,...,x_n) + \sum_{i=1}^{n}(-1)^i(x_0*x_i,x_1*x_i,...,x_{i-1} * x_i,x_{i+1},x_{i+2},...,x_n).
	\end{equation*}}
	Then, $(C_n,\partial_n)$ is a chain complex and the {\bf one term homology} groups denoted by $H_*^O(X)$ are defined in the usual way.
\end{definition}

\begin{definition}
	Let $(X, *)$ be a shelf. For $ n \geq 0$, let $C_n = \mathbb{Z}X^{n+1}$ and $\partial_n : C_n \longrightarrow C_{n-1}$ given by:
		\begin{equation*}
		\begin{split}
		\partial_n(x_0,x_1,..., x_n) = \sum_{i=1}^{n}(-1)^i\{ &(x_0,x_1,...,x_{i-1},x_{i+1},x_{i+2},...x_n) \\
		- &(x_0*x_i,x_1*x_i,...,x_{i-1} * x_i,x_{i+1},x_{i+2},...,x_n)\}.
		\end{split}
		\end{equation*}
	Then, $(C_n,\partial_n)$ is a chain complex and the {\bf rack homology} groups denoted by $H_*^R(X)$ are defined in the usual way.
\end{definition}

The one term and rack homology groups of associative shelves were studied in \cite{CMP}. In particular, it was proven that for associative shelves with a right fixed element the rack homology groups are $\mathbb{Z}$ in all dimensions. Moreover, it was observed that proto unital shelves always have right zero. Table \ref{computational data for associative shelves} indicates that lbo homology groups of proto unital shelves are not constant. Unital shelves have trivial one term homology groups in all positive dimensions. In particular, one term homology groups of shelves with either a left zero or a right unit are trivial in all positive dimensions \cite{Prz1}. Again, from Table \ref{computational data for associative shelves} one may infer that lbo homology of such shelves is more interesting.

\subsection{Open questions}

With the very limited computational data that is there, one might suspect some patterns in lbo homology. They are as follows.

\begin{remark}
	Let $(X,*)$ be an idempotent semi-group of size $n$ for $n>3$. Tables 5 and 6 suggest that $H_2(X) = 0$ for all but the trivial idempotent semi-group for each $n$. It would be interesting to find the reason behind this!
\end{remark}

\begin{conjecture}
	There is no torsion in lbo homology.
\end{conjecture}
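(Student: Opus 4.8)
The plan is to prove torsion-freeness by comparing the homology over $\mathbb{Q}$ with the homology over each prime field $\mathbb{F}_p$. Since every chain group $C_n=\mathbb{Z}X^{n+1}$ is free abelian, the universal coefficient theorem says that $H_n(X)$ carries $p$-torsion precisely when $\dim_{\mathbb{F}_p}H_n(X;\mathbb{F}_p)>\dim_{\mathbb{Q}}H_n(X;\mathbb{Q})$, and this discrepancy occurs exactly when the rank of one of the integer boundary matrices $\partial_n$ drops under reduction modulo $p$. So the conjecture is equivalent to the statement that each $\partial_n$, written in the standard basis of tuples, has the same rank over $\mathbb{Q}$ as over every $\mathbb{F}_p$; equivalently, that the only nonzero elementary divisors in its Smith normal form equal $1$. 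My first step would be to reduce the conjecture to this \emph{unimodularity} of the boundary matrices.

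To establish unimodularity I would try to exhibit, for each $n$, an explicit pivoting of $\partial_n$ with $\pm 1$ pivots: an ordering of a subset of the basis tuples of $C_n$ and of $C_{n-1}$ under which $\partial_n$ becomes triangular with unit diagonal on a maximal block. Because each face map $d_i$ of \ref{1.1} sends a basis tuple to a single basis tuple, $\partial_n$ applied to a basis element is an alternating sum of $n+1$ tuples, each with coefficient $\pm 1$ before like terms are collected (Proposition \ref{existential proof of lbo homology} guarantees these cancel consistently). The systematic tool for producing such a pivoting is discrete Morse theory applied to the pre-simplicial (semi-simplicial) set $\mathcal{L}$ of Section 2: I would construct an acyclic partial matching on its cells, so that the critical cells compute $H_*(X)$ through a Morse complex, and then verify that every incidence number of that Morse complex lies in $\{0,\pm 1\}$. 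Incidence numbers of absolute value one force all elementary divisors to be $1$, and hence rule out torsion.

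A complementary, more structural line is available in the idempotent case. Every idempotent semigroup is a semilattice of rectangular bands, and each rectangular band is a direct product of a left-zero and a right-zero semigroup; this decomposition induces a filtration of the chain complex whose associated spectral sequence has an $E^1$-page assembled from the very regular homologies of the rectangular pieces. If one can show the $E^1$-page is torsion-free and that every differential is realized by an integer matrix with unit elementary divisors, torsion-freeness of $H_*(X)$ follows. The same filtration idea, organized through the stabilization relation \ref{lengthaxiom equation} instead of the band structure, should extend the argument to associative shelves.

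The main obstacle in both approaches is the \emph{cyclic} coupling introduced by the wrap-around term $x_n*x_0$ in the extreme face maps $d_0$ and $d_n$ of \ref{1.1}, which entangles the first and last coordinates of every tuple. Because of this entanglement the chain complex does not split as a tensor product of simpler (bar-type) complexes, and it is exactly this failure to split that a priori permits torsion and that any proof must control. In the Morse-theoretic approach the acyclic matching has to be chosen compatibly with the identification of the two ends of each simplex, and I expect the real difficulty to be verifying that this compatibility never forces an incidence number of absolute value $\geq 2$. A sensible way to test the whole strategy is to run it first on the two-element shelf of Table \ref{small associative shelf}, where the vanishing $H_i(X)=0$ for $0<i<10$ is already known, and then on the trivial idempotent semigroups, whose rapidly growing but manifestly free homology in Tables \ref{computational data for associative shelves}--\ref{computational data for idempotent semi-groups} furnishes the most demanding check.
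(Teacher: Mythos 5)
First, a point of comparison: the statement you are proving is stated in the paper as an open \emph{conjecture}; the paper offers no proof, only the one-sentence suggestion that one might attack it by studying the homotopy type of the geometric realization $|\mathcal{L}|$. So there is no argument in the paper to match yours against, and your text is likewise a research plan rather than a proof: the acyclic matching is never constructed, the spectral sequence of the semilattice-of-rectangular-bands filtration is never computed, and the extension to associative shelves is only asserted ``should extend.'' As it stands, nothing is established beyond the (correct and standard) reduction, via the universal coefficient theorem and Smith normal form, of torsion-freeness to the statement that every boundary matrix $\partial_n$ has all nonzero elementary divisors equal to $1$.

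Beyond incompleteness, there is one concrete logical error that would sink the Morse-theoretic branch even if the matching were produced: you claim that ``incidence numbers of absolute value one force all elementary divisors to be $1$.'' This is false. A matrix with every entry in $\{0,\pm 1\}$ can have elementary divisors larger than $1$; the $2\times 2$ matrix with rows $(1,1)$ and $(1,-1)$ has Smith normal form $\mathrm{diag}(1,2)$, and the standard triangulation of the real projective plane has all simplicial incidence numbers in $\{0,\pm 1\}$ yet $H_1 \cong \mathbb{Z}/2$. So verifying that the Morse complex has unit incidence numbers does not rule out torsion; you would need something much stronger, e.g.\ that each critical cell's Morse differential is zero or hits a single critical cell with coefficient $\pm 1$ (a ``perfect'' or collapsibility-type statement), or a genuinely global unimodularity argument. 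Since you correctly identify the cyclic wrap-around term $x_n * x_0$ as the obstruction to any tensor-product splitting, and since that same term is what makes a clean triangular pivoting unlikely, the conjecture remains open under your approach; the numerical evidence in Tables \ref{computational data for associative shelves}--\ref{computational data for abcabbc semi-groups} is consistent with it but is not replaced by the proposal.
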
	
 One possible way to attack the above conjecture is by studying the homotopy type of the CW complex formed by the geometric realization arising from lbo homology. 

\begin{remark}
	There are not many geometric interpretations of the idempotence axiom. However, with such an interpretation, lbo homology may turn out to be a very useful invariant for those geometric structures assuming it would not be difficult to convert the associativity axiom in a similar manner. In connection to knot theory it might be useful to consider coloring knotted trivalent graphs.
\end{remark}

\section{Acknowledgements}

The author would like to thank Louis H. Kauffman, J\'ozef H. Przytycki, and Masahico Saito for their useful comments and suggestions.

\end{document}